\theoremstyle{plain}
\numberwithin{equation}{section}
\theoremstyle{plain}
\newtheorem{Proposition}[equation]{Proposition}
\newtheorem*{Corollary*}{Corollary}
\newtheorem{Theorem}[equation]{Theorem}
\newtheorem*{Theorem*}{Theorem}
\newtheorem{Lemma}[equation]{Lemma}
\theoremstyle{definition}
\newtheorem{Definition}[equation]{Definition}
\newtheorem{Remark}[equation]{Remark}
\def\phi{\varphi}
\renewcommand{\leq}{\leqslant}
\renewcommand{\geq}{\geqslant}
\newcommand{\D}{\mathbb{D}}
\newcommand{\Pol}{\mathcal{P}}
\begin{document}



\title{Optimal Polynomial Approximants in $H^p$}
\author[Centner]{Raymond Centner}
\address{Department of Mathematics and Statistics, University of Southern Maine, Portland, ME 04104, USA. } \email{raymond.centner@maine.edu}
\author[Cheng]{Raymond Cheng}
\address{Department of Mathematics and Statistics, Old Dominion University, Norfolk, VA 23529, USA. } \email{rcheng@odu.edu}
\author[Felder]{Christopher Felder}
\address{Department of Mathematics, Indiana University, Bloomington, IN 47405, USA. } \email{cfelder@iu.edu}
\subjclass[2020]{Primary 30E10; Secondary 46E30.}
\keywords{Optimal polynomial approximant, Pythagorean inequality.}
\date{\today}

\begin{abstract} 
This work studies optimal polynomial approximants (OPAs) in the classical Hardy spaces on the unit disk, $H^p$ ($1 < p < \infty$).
In particular, we uncover some estimates concerning the OPAs of degree zero and one. It is also shown that if $f \in H^p$ is an inner function, or if $p>2$ is an even integer, then the roots of the nontrivial OPA for $1/f$ are bounded from the origin by a distance depending only on $p$.  For $p\neq 2$, these results are made possible by the novel use of a family of inequalities which are derived from a Banach space analogue of the Pythagorean theorem.

\end{abstract}

\maketitle


\tableofcontents


\section{Introduction}

For $1 < p < \infty$, we let $H^p$ denote the classical Hardy space of holomorphic functions on the unit disk $\D$, 
\[
H^p := \left\{f \in \operatorname{Hol}(\D) : \sup_{0\le r < 1} \frac{1}{2\pi}\int_0^{2\pi} |f(re^{i\theta})|^p \ d\theta < \infty \right\}.
\]
As is standard, for $f\in H^p$, we adopt the norm notation 
\[
\|f\|_p := \left(\sup_{0\le r < 1} \frac{1}{2\pi}\int_0^{2\pi} |f(re^{i\theta})|^p \ d\theta \right)^{1/p}.
\]
Given a non-negative integer $n$, the \textit{$n$-th optimal polynomial approximant} to $1/f$ is the polynomial solving the minimization problem
\[
\min_{q \in \Pol_n}\|1 - qf\|_p,
\]
where $\Pol_n$ is the set of complex polynomials of degree less than or equal to $n$. 
For brevity, we shall use the acronym \textit{OPA} to mean optimal polynomial approximant. 
Notationally, for fixed $n$ and $f \in H^p$, we denote this polynomial by $q_{n,p}[f]$.
It is important to note that when $1 < p < \infty$, the space $H^p$ is \textit{uniformly convex}. Consequently, $H^p$ possesses the \textit{unique nearest point property}: if $\mathcal{V} \subseteq H^p$ is a closed linear subspace and $h \in H^p$, then there is a unique vector $u \in \mathcal{V}$ such that $u$ solves the minimization problem
\[
\min_{\mu \in \mathcal{V}} \| \mu - h \|_p.
\]
The function $u$ is referred to as the \textit{metric projection} of $h$ onto $\mathcal{V}$. When $p=2$, this projection is the usual orthogonal projection; when $p\neq 2$, the metric projection is generally nonlinear. Consequently, in the latter case, the metric projection does not come equipped with the explicit computational benefits enjoyed in the Hilbert space case.

In terms of OPAs, we see that $q_{n,p}[f]$ is simply the metric projection of the unit constant function onto the subspace of $H^p$ spanned by $\{f, zf, z^2 f,\ldots,$  $z^n f\}$.  Heuristically, we see that if a polynomial $Q$ looks like $1/f$, then $Qf - 1$ looks like zero; thus, $q_{n,p}[f]$ is indeed an approximation to $1/f$. As $n$ is exhausted ad infinitum, $\|q_{n,p}[f]f-1\|_p$ decreases \textit{optimally}.

The genesis of OPAs is in the setting of $H^2$ and first appeared in engineering literature with respect to the study of digital filters in signal processing. More recently, OPAs have attracted interest among mathematicians working in various settings (see, for example, the surveys by B\'{e}n\'{e}teau and the first named author \cite{BC}, and Seco \cite{Sec}). This revival has been primarily informed by the hunt for the classification of cyclic functions. In a space of functions where the forward shift is bounded and the polynomials are dense, it is elementary to see that the linear span of orbit of the function 1 under the shift is dense in the space itself. A function with this property is referred to as \textit{cyclic}. In $H^2$, it is well known that such vectors have an explicit function theoretic form, known as \textit{outer}. However, there is  little known outside of $H^2$.

Conceptually, OPAs extend directly to any normed function space that is uniformly convex.
They have been explored in Dirichlet type spaces, 
\cite{BCLSS1,BFKSS,BKLSS};
more general reproducing kernel Hilbert spaces  
\cite{FMS};
the spaces $\ell^p_A$ \cite{CRSX,ST};
$L^p$ of the unit circle  \cite{Cent};
Hilbert spaces of analytic 
functions on the bidisk  
\cite{BCLSS2,BKKLSS} and the unit ball
\cite{SS2,SS1}.
They have been studied in the context of free polynomials as well
\cite{AAJS}. 

A characterization for cyclic functions in $H^p$ is well-known (as a consequence of the classical Beurling Theorem), so the motivation of cyclicity does not directly carry over to the exploration here. However, the Banach space geometry makes the study of these types of approximation problems much more challenging; the analysis becomes nonlinear when $p\neq 2$. Further, as we shall see, these problems require novel techniques.

The objective of this paper is to more closely study some of the characteristics of OPAs in $H^p$. This study originated in \cite{Cent}, where OPAs in $L^p$ (of the unit circle) and $H^p$ were first considered.  Since OPAs were originally created to be implemented in the design of digital filters, it was natural to consider how a change to the $H^p$ setting would affect the properties of the filter.  We continue to explore OPAs in $H^p$ here, $1<p<\infty$. We note that this exploration is distinctly different from the $p=2$ case; in that setting, much is known--  thanks in part to the Hilbert space geometry. In $H^2$, the orthogonal projection provides a direct connection to many developed areas of analysis, including the theory of reproducing kernels and orthogonal polynomials. We point to \cite{MR3614926} for results in this arena, including lower bounds on the moduli of the root of an OPA (there, it is shown that an OPA cannot vanish inside the closed unit disk). 

The outline of the paper is as follows: 
in Section \ref{prelim}, we provide some background and notation, introducing extensions of the Pythagorean inequality, which will later be used in lieu of linearity when $p\neq2$. 
Sections \ref{deg_0} and \ref{deg_1} establish results concerning the coefficients of the constant and linear OPAs associated to a function in $H^p$, respectively. 
We end this work in Section \ref{roots}, with results about the location of OPA roots in two cases---when $f$ is an inner function and when $p>2$ an even integer.


\section{Notation, Orthogonality, and Pythagorean Inequalities}\label{prelim}

In this section we establish some notation, review a notion of orthogonality for normed spaces, and present an associated version of the Pythagorean theorem that applies to the $L^p$ spaces. This extension of the Pythagorean theorem takes the form of a family of inequalities.  They will play a central role in our study of OPAs.

Let $\mathbf{x}$ and $\mathbf{y}$ be vectors belonging to a normed linear space $\mathscr{X}$.  We say that $\mathbf{x}$ is {\em orthogonal} to $\mathbf{y}$ in the Birkhoff-James sense \cite{AMW, Jam}  if
 \[    \|  \mathbf{x} + \beta \mathbf{y} \|_{\mathscr{X}} \geq \|\mathbf{x}\|_{\mathscr{X}}  \]
for all scalars $\beta$. 
In this situation we write $\mathbf{x} \perp_{\mathscr{X}} \mathbf{y}$.

There are other ways to generalize orthogonality from an inner product space to a normed space, but this approach is particularly useful since it is based on an extremal condition.

If $\mathscr{X}$ is an inner product space, then the relation $ \perp_{\mathscr{X}} $ coincides with the usual orthogonality relation.  In more general spaces, however, the relation $\perp_{\mathscr{X}}$ is neither symmetric nor linear.  In case $\mathscr{X} = L^p(\mu)$, for any measure $\mu$,  let us write $\perp_p$ instead of  $\perp_{L^p}$, and similarly for $\mathscr{X} = H^p$.  
 
There is a practical criterion for  $\perp_p$ when $1<p<\infty$.

\begin{Theorem}[James \cite{Jam}]\label{praccritperp}
Suppose that $1<p<\infty$.  Then for $f$ and $g$ belonging to $L^p$ we have
\begin{equation}\label{BJp}
 {f} \perp_{p} {g}  \iff   \int |f|^{p - 2} \overline{f} g \,d\mu  = 0,
\end{equation}
where any occurrence of ``$|0|^{p - 2} \bar{0}$'' in the integrand is interpreted as zero.
\end{Theorem}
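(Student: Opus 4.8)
The plan is to reduce Birkhoff--James orthogonality to a first-order (critical point) condition for a smooth convex function on $\C \cong \R^2$, and then to evaluate that condition by differentiating under the integral sign. Set $\Phi(\beta) := \|f + \beta g\|_p^p = \int |f + \beta g|^p \, d\mu$ for $\beta \in \C$. Since $t \mapsto t^{1/p}$ is strictly increasing on $[0,\infty)$, the defining inequality $\|f + \beta g\|_p \ge \|f\|_p$ for all $\beta$ is equivalent to the assertion that $\beta = 0$ is a global minimizer of $\Phi$. Writing $\beta = u + iv$, I regard $\Phi$ as a function of $(u,v) \in \R^2$. Two facts will drive the argument: $\Phi$ is convex (it is the composition of the affine map $\beta \mapsto f + \beta g$ with the norm, postcomposed with the increasing convex map $t \mapsto t^p$ on $[0,\infty)$), and $\Phi$ is continuously differentiable. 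Granting these, $\beta = 0$ is a global minimizer precisely when $\nabla \Phi(0) = 0$: the forward implication uses only that a local minimizer of a differentiable function is a critical point, while the reverse implication uses convexity, since a critical point of a convex function is automatically a global minimum.

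It then remains to compute $\nabla\Phi(0)$. Pointwise, $z \mapsto |z|^p = (z\bar z)^{p/2}$ is $C^1$ on $\C$ for $p > 1$, with $\partial_u |f + \beta g|^p = p|f + \beta g|^{p-2}\operatorname{Re}\bigl(\overline{(f + \beta g)}\,g\bigr)$ and $\partial_v |f+\beta g|^p = -p|f+\beta g|^{p-2}\operatorname{Im}\bigl(\overline{(f+\beta g)}\,g\bigr)$, where the singular factor $|f+\beta g|^{p-2}$ causes no trouble because it is absorbed into $|f+\beta g|^{p-1}$, which vanishes as $f + \beta g \to 0$. Evaluating at $\beta = 0$ and passing the derivatives inside the integral yields
\[
\frac{\partial \Phi}{\partial u}(0) = p\operatorname{Re}\int |f|^{p-2}\bar f g\,d\mu, \qquad \frac{\partial \Phi}{\partial v}(0) = -p\operatorname{Im}\int |f|^{p-2}\bar f g\,d\mu.
\]
Thus $\nabla\Phi(0) = 0$ holds if and only if both the real and imaginary parts of $\int |f|^{p-2}\bar f g\,d\mu$ vanish, that is, if and only if $\int |f|^{p-2}\bar f g\,d\mu = 0$, which is exactly \eqref{BJp}.

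The main obstacle---and the only place real care is required---is justifying the differentiation under the integral sign, especially when $1 < p < 2$, where $|f + \beta g|^{p-2}$ blows up on the zero set of $f + \beta g$. I would handle this by a dominated-convergence argument applied to the difference quotients. By the mean value theorem, for a real increment the relevant quotient is bounded in modulus by $p|f + \beta' g|^{p-1}|g|$ for some intermediate parameter $\beta'$; restricting to increments of size at most $1$, one has the $\beta$-independent majorant $G := |f| + (|\beta| + 1)|g| \in L^p(\mu)$, so that $|f + \beta' g|^{p-1} \le G^{p-1}$. Since $G^{p-1} \in L^{p'}(\mu)$ (with $p'$ the conjugate exponent) and $g \in L^p(\mu)$, H\"older's inequality gives $G^{p-1}|g| \in L^1(\mu)$. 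This integrable dominating function legitimizes the interchange of limit and integral, simultaneously establishing that $\Phi \in C^1$ and validating the displayed formulas for the partial derivatives, which completes the proof.
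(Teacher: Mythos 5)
The paper does not prove this statement at all --- it is quoted as a theorem of James, with a citation to \cite{Jam}, and used as a black box --- so there is no internal proof to compare yours against; you have supplied a self-contained proof of a background result. Your argument is correct and is essentially the standard variational derivation of James's criterion: recast $f \perp_p g$ as the statement that $\beta=0$ globally minimizes the convex function $\Phi(\beta)=\int|f+\beta g|^p\,d\mu$ on $\C\cong\R^2$, note that for a differentiable convex function this holds precisely when $\nabla\Phi(0)=0$, and identify $\nabla\Phi(0)$ with (a constant multiple of) the real and imaginary parts of $\int|f|^{p-2}\bar f g\,d\mu$ by differentiating under the integral sign. Your treatment of the delicate range $1<p<2$ is right: the pointwise gradient of $z\mapsto|z|^p$ has modulus $p|z|^{p-1}$, so it extends continuously by zero across the zero set (matching the paper's convention for ``$|0|^{p-2}\bar 0$''), and your mean-value-theorem majorant $pG^{p-1}|g|$ with $G=|f|+(|\beta|+1)|g|$ is genuinely in $L^1$ by H\"older, since $G^{p-1}\in L^{p'}$. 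The one point you should tighten: as written, dominated convergence gives existence of the two partial derivatives of $\Phi$ together with their integral formulas, but your reverse implication needs $\Phi$ to be differentiable at $0$ --- ``a critical point of a convex function is a global minimum'' runs through the subgradient inequality, which in general requires more than existence of the two partials. This is repaired either by one further, identical application of dominated convergence showing that the partial-derivative formulas vary continuously in $\beta$, so that $\Phi\in C^1$ and you can finish by restricting to lines $t\mapsto\Phi(t\beta)$, or by citing the standard fact that a finite convex function on $\R^2$ possessing both partial derivatives at a point is automatically differentiable there. That is a one-sentence repair, not a flaw in the method.
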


In light of \eqref{BJp} we define, for a complex number $\alpha=re^{i\theta}$, and any $s > 0$, the quantity
\[
\alpha^{\langle s \rangle} = (re^{i\theta})^{\langle s \rangle} := r^{s} e^{-i\theta}.
\]
It is straightforward to verify that for any complex numbers $\alpha$ and $\beta$,  exponent $s>0$, and integer $n \geq 0$, we have
\begin{align*}
(\alpha\beta)^{\langle s\rangle} &= \alpha^{\langle s\rangle} \beta^{\langle s\rangle}\\
|\alpha^{\langle s \rangle}| &= |\alpha|^s\\
\alpha^{\langle s \rangle}\alpha &= |\alpha|^{s+1}\\
(\alpha^{\langle s \rangle})^n &= (\alpha^n)^{\langle s \rangle}\\
(\alpha^{\langle p-1 \rangle})^{\langle p'-1 \rangle} &= \alpha.
\end{align*}

By comparison with the case  $p=2$, we can think of taking the  ${\langle s \rangle}$ power as generalizing complex conjugation.

Throughout this paper, the parameter $p$ satisfies $1<p<\infty$, and we use $p'$ to denote the H\"older conjugate of $p$, that is,
\[
     \frac{1}{p}+\frac{1}{p'} = 1.
\]
If $f \in L^p$, $1<p<\infty$, then $f^{\langle p - 1\rangle} \in L^{p'}$.
Thus, from  \eqref{BJp} we get
\[
f \perp_{p} g \iff \langle g, f^{\langle p - 1\rangle} \rangle = 0.
\]
Consequently, the relation $\perp_{p}$ is linear in its second argument, and it then makes sense to speak of a vector being orthogonal to a subspace of $L^p$.  In particular, if $f \perp_p g$ for all $g$ belonging to a subspace $\mathscr{M}$ of $L^p$, then
\[
       \| f + g \|_p  \geq  \|f\|_p
\]
for all $g \in \mathscr{M}$.  That is, $f$ relates to a nearest-point problem with respect to the subspace $\mathscr{M}$.

Direct calculation will also confirm that
\[
     \langle f, f^{\langle p-1\rangle} \rangle = \|f\|^p_p,
\]
and hence $f^{\langle p-1\rangle}/ \|f\|^{p-1}_p \in L^{p'}$ is the norming functional of $f \in L^p \setminus \{0\}$ (smoothness ensures that the norming functional is unique).

In connection with Birkhoff-James Orthogonality, there is a version of the Pythagorean theorem for $L^p$.
 It takes the form of a family of inequalities relating the lengths of orthogonal vectors with that of their sum.

\begin{Theorem}\label{pythagthm}
Suppose that $x \perp_p y$ in $L^p$.
If $p \in (1, 2]$, then
\begin{align}
   \| x + y \|^p_p & \leq  \|x\|^p_p + \frac{1}{2^{p-1}-1}\|y\|^p_p \label{upper1}\\
    \| x + y \|^2_p & \geq  \|x\|^2_p + (p-1)\|y\|^2_p.\label{lower1}
\end{align}
If $p \in [2, \infty)$, then
\begin{align}
   \| x + y \|^p_p & \geq  \|x\|^p_p + \frac{1}{2^{p-1}-1}\|y\|^p_p \label{lower2}\\
    \| x + y \|^2_p & \leq  \|x\|^2_p + (p-1)\|y\|^2_p.\label{upper2}
\end{align}
\end{Theorem}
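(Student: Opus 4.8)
The plan is to treat the two pairs of estimates by genuinely different mechanisms. The $p$-th power inequalities \eqref{upper1} and \eqref{lower2} I would obtain from a single pointwise (scalar) inequality that is then integrated, while the squared-norm inequalities \eqref{lower1} and \eqref{upper2} I would derive from the $2$-uniform convexity and smoothness of $L^p$ (the ``weak parallelogram laws'') combined with the Birkhoff--James condition. In both cases the role of orthogonality is the same: it annihilates the first-order term, leaving exactly the two diagonal contributions.

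For \eqref{lower2} (the case $p \ge 2$) I would first reduce everything to the scalar inequality
\[
   |a+b|^p \ge |a|^p + p\operatorname{Re}\!\left(|a|^{p-2}\bar{a}\,b\right) + \frac{1}{2^{p-1}-1}\,|b|^p \qquad (a,b \in \C),
\]
with the inequality reversed when $1 < p \le 2$. Granting this, I would put $a = x$, $b = y$ and integrate $d\mu$: the middle term becomes $p\operatorname{Re}\int |x|^{p-2}\bar{x}\,y\,d\mu$, which vanishes because $\int |x|^{p-2}\bar{x}\,y\,d\mu = 0$ by the James criterion \eqref{BJp}, and the outer terms give $\|x\|_p^p$ and $\tfrac{1}{2^{p-1}-1}\|y\|_p^p$, yielding \eqref{lower2} (and \eqref{upper1} from the reversed scalar inequality). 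Since the scalar inequality is homogeneous, after scaling I may take $a=1$ and must show $G(w) := |1+w|^p - 1 - p\operatorname{Re}(w) \ge \tfrac{1}{2^{p-1}-1}|w|^p$. The key simplification is that $|1+w|^2 = 1 + 2\operatorname{Re}(w) + |w|^2$, so $G$ depends on $w$ only through $\operatorname{Re}(w)$ and $|w|$, and the problem collapses to a two-variable calculus estimate on $\{|\operatorname{Re} w| \le |w|\}$. I expect this to be the main obstacle: the constant is not pointwise sharp (one checks, for instance, that equality holds only in the limit $p=2$), so the verification must be carried out carefully for all real $p$, not merely even integers.

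For the squared-norm estimates I would invoke the sharp $2$-uniform smoothness of $L^p$ for $p \ge 2$ and its dual $2$-uniform convexity for $1 < p \le 2$, in the form
\[
   \|u+v\|_p^2 + \|u-v\|_p^2 \le 2\|u\|_p^2 + 2(p-1)\|v\|_p^2 \qquad (p \ge 2),
\]
with the inequality reversed for $1 < p \le 2$. Applying this with $u = x + \tfrac12 y$ and $v = \tfrac12 y$, so that $u+v = x+y$ and $u-v = x$, and writing $\Phi(y) := \|x+y\|_p^2 - \|x\|_p^2$, the orthogonality $x \perp_p y$ (which is homogeneous, hence $x \perp_p \tfrac12 y$) produces the recursion $\Phi(y) \le 2\,\Phi(\tfrac12 y) + \tfrac{p-1}{2}\|y\|_p^2$. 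A direct application here only gives the constant $2(p-1)$, so the point of the recursion is to remove this factor of two: iterating dyadically yields a geometric series summing to $(p-1)\|y\|_p^2$, while the residual term $2^{n}\Phi(2^{-n}y)$ tends to $0$, since smoothness forces $\Phi(\delta y) = O(\delta^2)$ and $\Phi(\delta y) \ge 0$ by the extremal property of orthogonality; this gives \eqref{upper2}. The reversed recursion gives \eqref{lower1}, where one needs only $\Phi \ge 0$ and may simply discard the nonnegative residual. The one genuinely deep ingredient is the sharp constant $p-1$ in the $2$-uniform smoothness and convexity of $L^p$; granting that classical fact, the passage to the orthogonal Pythagorean form is elementary.
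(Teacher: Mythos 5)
The paper itself does not prove Theorem \ref{pythagthm}: it quotes the inequalities from the literature, citing \cite{Byn,BD,CMP1} for their origins and \cite[Corollary 3.4]{CR} for a unified derivation from weak parallelogram laws. Measured against those sources, your proposal is correct and in fact reconstructs both known routes. Your treatment of \eqref{lower1} and \eqref{upper2} --- the sharp $2$-uniform convexity/smoothness law with constant $p-1$, the substitution $u=x+\tfrac12 y$, $v=\tfrac12 y$, and the dyadic iteration --- is precisely the mechanism of the cited unified treatment, and it is complete once the classical sharp constant $p-1$ (Ball--Carlen--Lieb) is granted. Two small corrections of emphasis there: the recursion $\Phi(y)\leq 2\Phi(\tfrac12 y)+\tfrac{p-1}{2}\|y\|_p^2$ holds for \emph{arbitrary} $x,y$, with no orthogonality; orthogonality enters only through $\Phi\geq 0$ and through the residual estimate, and for the latter no separate smoothness input is needed, since applying the same parallelogram law to the pair $(x,\delta y)$ together with $\|x-\delta y\|_p\geq\|x\|_p$ already gives $\Phi(\delta y)\leq 2(p-1)\delta^2\|y\|_p^2$, whence $2^n\Phi(2^{-n}y)\to 0$. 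Your treatment of \eqref{upper1} and \eqref{lower2} (a pointwise scalar inequality integrated against the James criterion \eqref{BJp}, with H\"older guaranteeing integrability of the cross term $|x|^{p-2}\bar{x}y$) is the route of \cite{CMP1}.

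The one genuine gap is the scalar inequality you explicitly deferred, and it is true; your own reduction closes it. Take $a=1$, write $u=\operatorname{Re}w$, $r=|w|$, so that $G=(1+2u+r^2)^{p/2}-1-pu$ on $|u|\leq r$. For $p\geq 2$, $G$ is convex in $u$ with critical point $u=-r^2/2$. If $r\leq 2$ the critical point is admissible and $\min_u G=pr^2/2$; the claim $pr^2/2\geq \frac{r^p}{2^{p-1}-1}$ for $0<r\leq 2$ is worst at $r=2$, where it reduces to $2^{p-1}(p-1)\geq p$, valid for $p\geq 2$. If $r\geq 2$ the minimum is at the endpoint $u=-r$, where $G=(r-1)^p+pr-1$, so one must show
\[
 g(r):=(2^{p-1}-1)\bigl[(r-1)^p+pr-1\bigr]-r^p\geq 0,\qquad r\geq 2 .
\]
This follows from two nested monotonicity checks: $g(2)=2^p(p-1)-2p\geq 0$, and $g'(r)=p\,\psi(r)$ where
\[
 \psi(r)=(2^{p-1}-1)\bigl[(r-1)^{p-1}+1\bigr]-r^{p-1}
\]
satisfies $\psi(2)=2^{p-1}-2\geq 0$ and $\psi'(r)=(p-1)\bigl[(2^{p-1}-1)(r-1)^{p-2}-r^{p-2}\bigr]\geq 0$, the last because $\bigl(\tfrac{r}{r-1}\bigr)^{p-2}\leq 2^{p-2}\leq 2^{p-1}-1$ for $r\geq 2$, $p\geq 2$. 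For $1<p\leq 2$ the same reduction runs with every inequality reversed (the critical point is now a maximum, and the two monotonicity checks flip sign), giving the reversed scalar inequality. With that lemma in hand, both halves of your argument are complete.
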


These inequalities originate from \cite{Byn,BD,CMP1}; see \cite[Corollary 3.4]{CR} for a unified treatment with broader classes of spaces.

It will sometimes be expedient to refer to \eqref{upper1} and \eqref{upper2} as the upper Pythagorean inequalities, and \eqref{lower1} and \eqref{lower2} as the lower Pythagorean inequalities, so that the two cases depending on $p$ can be handled together.  The specific values of the positive multiplicative constants, i.e., $p-1$ and $1/(2^{p-1}-1)$, are generally unimportant, and thus they will usually be denoted simply by $K$, possibly with a subscript.

When $p=2$, these Pythagorean inequalities reduce to the familiar Pythagorean theorem for the Hilbert space $L^2$.  More generally, these Pythagorean inequalities enable the application of some Hilbert space methods and techniques to smooth Banach spaces satisfying the weak parallelogram laws; see, for example, \cite[Proposition 4.8.1 and Proposition 4.8.3; Theorem 8.8.1]{CMR}.

Before moving onto results, we recall the notation for the $n$-th optimal polynomial approximant. 
\begin{Definition}[OPA]
Given a non-negative integer $n$ and a function $f \in H^p$, the \textit{$n$-th optimal polynomial apporoximant} to $1/f$ in $H^p$ is the polynomial solving the minimization problem
\[
\min_{q \in \Pol_n}\|1 - qf\|_p,
\]
where $\Pol_n$ is the set of complex polynomials of degree less than or equal to $n$. This polynomial exists, is unique, will be denoted by
\[
q_{n,p}[f].
\]
\end{Definition}

\section{The Degree Zero OPA}\label{deg_0}

In this section, we use the Pythagorean inequalities to obtain upper and lower bounds for $q_{0,p}[f]$, the constant OPA. When $p = 2$, we can explicitly calculate OPAs with routine Hilbert space (linear algebra) methods. When $p \neq 2$, the Pythagorean formula manifests itself in the form of inequalities. In turn, we trade direct calculation for estimates. We use this updated Pythagorean toolbox to culminate the results in the present section. 

From \cite[Proposition 5.2]{Cent}, we know that $f(0)=0$ if and only if $q_{0,p}[f]$ vanishes identically.  Thus, to avoid the trivial case, we assume that $f(0) \neq 0$.

Fix $1<p<\infty$.    For $f \in H^p$, let $\lambda\in\mathbb{C}$ be the constant OPA $q_{0,p}[f]$, that is,
\[
       \| \lambda f - 1 \|_p   =  \inf_{c\in\mathbb{C}} \|cf - 1\|_p.
\]

Let us adopt the following notation for the Pythagorean inequalities:  when $f \perp_p g$,
\begin{align}
    \|f + g\|_p^r &\geq \|f\|_p^r + K_1 \|g\|_p^r  \label{pythag1}\\
    \|f + g\|_p^s &\leq \|f\|_p^s + K_2 \|g\|_p^s. \label{pythag2}
\end{align}

\textbf{Remark}: If $g$ is small compared to $f$, then \eqref{pythag2} is sharper than the triangle inequality.  

With that, here is an upper bound for  $|q_{0,p}[f]|$, in terms of $f$.

\begin{Proposition}  Let $1<p<\infty$, and suppose that $f \in H^p$ satisfies $f(0)\neq 0$.  If $\lambda = q_{0,p}[f]$, then
    \[
     |\lambda| \leq \frac{1}{K_1^{1/r} \big[ \|f - f(0)\|_p^r + \|f\|_p^r   \big]^{1/r}},
\]
where $r$ and $K_1$ are the lower Pythagorean parameters.
\end{Proposition}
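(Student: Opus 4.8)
The plan is to extract two lower Pythagorean inequalities from two genuinely different orthogonal decompositions of the same data and then add them. First I would record the extremal characterization of the degree-zero OPA: since $\lambda f$ is by definition the metric projection of the constant function $1$ onto the one-dimensional subspace $\mathrm{span}\{f\}$, the error $1 - \lambda f$ is Birkhoff-James orthogonal to that subspace. In particular $1 - \lambda f \perp_p f$, and because $\perp_p$ is linear in its second argument this upgrades to $1 - \lambda f \perp_p \lambda f$. Writing $1 = (1 - \lambda f) + \lambda f$ and feeding this into the lower Pythagorean inequality \eqref{pythag1} (with $x = 1-\lambda f$ and $y = \lambda f$) yields
\[
1 = \|1\|_p^r \;\geq\; \|1 - \lambda f\|_p^r + K_1\,|\lambda|^r\,\|f\|_p^r .
\]

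At this point the target inequality will follow once I produce the supplementary lower bound $\|1 - \lambda f\|_p^r \geq K_1\,|\lambda|^r\,\|f - f(0)\|_p^r$: adding it to the display above and factoring out $K_1|\lambda|^r$ gives exactly $1 \geq K_1|\lambda|^r\big(\|f-f(0)\|_p^r + \|f\|_p^r\big)$, which rearranges to the claimed estimate for $|\lambda|$. So the whole proof reduces to establishing that one lower bound on the error.

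For that I would use a second orthogonal splitting, this time of the error vector itself. Decompose
\[
1 - \lambda f = \big(1 - \lambda f(0)\big) - \lambda\big(f - f(0)\big),
\]
where the first summand is a constant and the second lies in $zH^p$, since $f - f(0)$ vanishes at the origin and is therefore of the form $z h$ with $h \in H^p$. The key observation, which I expect to be the crux of the argument, is that every constant is $\perp_p$ to $zH^p$: this follows from the James criterion (Theorem \ref{praccritperp}), because for a constant $c$ and $h \in H^p$ the relevant integrand is a scalar multiple of $zh$, whose mean over the circle is its zeroth Fourier coefficient and hence vanishes. Applying the lower Pythagorean inequality \eqref{pythag1} to this decomposition gives
\[
\|1 - \lambda f\|_p^r \;\geq\; |1 - \lambda f(0)|^r + K_1\,|\lambda|^r\,\|f - f(0)\|_p^r \;\geq\; K_1\,|\lambda|^r\,\|f - f(0)\|_p^r,
\]
where in the last step I simply discard the nonnegative term $|1 - \lambda f(0)|^r$. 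Combining this with the first inequality and solving for $|\lambda|$ finishes the proof.

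The routine parts are the two invocations of \eqref{pythag1} and the bookkeeping with the lower Pythagorean parameters $r$ and $K_1$ when combining them. The only step requiring genuine care is the second decomposition, specifically verifying via the James criterion that a constant is Birkhoff-James orthogonal to $zH^p$ and that $f - f(0)$ indeed belongs to $zH^p$; everything else is arithmetic.
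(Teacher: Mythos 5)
Your proof is correct and takes essentially the same route as the paper: both arguments use the extremal orthogonality $1-\lambda f \perp_p \lambda f$ together with the orthogonality of constants to functions vanishing at the origin (i.e., the decomposition $1-\lambda f = (1-\lambda f(0)) - \lambda(f-f(0))$), apply the lower Pythagorean inequality to each, chain the resulting estimates, and discard the term $|1-\lambda f(0)|^r$. The only differences are cosmetic sign conventions and the order in which that term is dropped.
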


\begin{proof}
First note that
\[
       1 \perp_p z^k
\]
for all $k \geq 1$, due to $\int |1|^{p-2}\bar{1} e^{ik\theta}\,dm = 0$.  Thus for any $f \in H^p$,  
\[
        \lambda f(0) - 1\ \   \perp_p \ \  \lambda(f - f(0)).
\]

The lower Pythagorean inequality, \eqref{pythag1}, gives
\begin{align}
    \|\lambda f - 1\|_p^r &\geq \| \lambda f(0) - 1\|_p^r + K_1 \| \lambda(f - f(0))\|_p^r \label{pythag1a}
\end{align}

Next, by the extremal property of $\lambda$, we also have
\[
     \lambda f - 1 \ \perp_p \ \lambda f.
\]
This yields
\begin{align}
    \|1\|_p^r &\geq \| \lambda f - 1\|_p^r + K_1 \|\lambda f\|_p^r \label{pythag1ab}
\end{align}

Combining \eqref{pythag1a} and \eqref{pythag1ab} takes us to
\begin{align}
       1  &\geq \|\lambda f - 1\|_p^r + K_1 \|\lambda f\|_p^r \nonumber\\
           &\geq \| \lambda f(0) - 1\|_p^r + K_1 \| \lambda(f - f(0))\|_p^r + K_1 \|\lambda f\|_p^r.\label{starthere}
\end{align}
Drop the first term on the right, and isolate $|\lambda|$, resulting in the following bound:
\[
     |\lambda| \leq \frac{1}{K_1^{1/r} \big[ \|f - f(0)\|_p^r + \|f\|_p^r   \big]^{1/r}}.
\]
\end{proof}

We may extract a sharper bound by estimating the first term of \eqref{starthere}, rather than dropping it.

\begin{Proposition}  Let $1<p<\infty$, and suppose that $f \in H^p$ satisfies $f(0)\neq 0$.  If $\lambda = q_{0,p}[f]$, then
\begin{equation}\label{ublamb2}
     |\lambda|^{r-1} \leq \frac{r|f(0)|}{K_1 \| (f - f(0))\|_p^r + K_1 \| f\|_p^r},
\end{equation}
where $r$ and $K_1$ are the lower Pythagorean parameters.
\end{Proposition}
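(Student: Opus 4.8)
The plan is to pick up from inequality \eqref{starthere} in the previous proof, namely
\[
1 \geq \| \lambda f(0) - 1\|_p^r + K_1 \| \lambda(f - f(0))\|_p^r + K_1 \|\lambda f\|_p^r ,
\]
but now to retain the first term rather than discard it. Since $\lambda f(0)-1$ is a constant function, its $H^p$ norm is just the modulus $|\lambda f(0)-1|$, and the last two terms combine to $|\lambda|^r\bigl(K_1\|f-f(0)\|_p^r+K_1\|f\|_p^r\bigr)$. Abbreviating $A:=K_1\|f-f(0)\|_p^r+K_1\|f\|_p^r$, the inequality reads $|\lambda|^r A \leq 1-|\lambda f(0)-1|^r$.

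The crux is then the scalar inequality
\[
1-|w-1|^r \leq r|w| \qquad \text{for every } w\in\mathbb{C},
\]
which I would apply with $w=\lambda f(0)$. To establish it, split on the size of $|w|$: if $|w|>1$ then the left side is at most $1$ while $r|w|>r\geq 2$, so the bound is immediate; if $|w|\leq 1$, the reverse triangle inequality gives $|w-1|\geq 1-|w|\geq 0$, whence $|w-1|^r\geq (1-|w|)^r$, and the convexity (Bernoulli) estimate $(1-t)^r\geq 1-rt$ for $t\in[0,1]$ yields $1-|w-1|^r\leq 1-(1-|w|)^r\leq r|w|$.

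Substituting $w=\lambda f(0)$ gives $|\lambda|^r A \leq r|\lambda|\,|f(0)|$. Because $f(0)\neq 0$ forces $\lambda\neq 0$ (by \cite[Proposition 5.2]{Cent}), I would cancel one factor of $|\lambda|$ and divide by $A>0$ to reach $|\lambda|^{r-1}\leq r|f(0)|/A$, which is exactly \eqref{ublamb2}.

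The main obstacle is the scalar inequality $1-|w-1|^r\leq r|w|$: since $w$ is complex one cannot simply differentiate, so the role of the case split and the reverse triangle inequality is to reduce the complex statement to the one-dimensional convexity fact $(1-t)^r\geq 1-rt$ (which uses $r\geq 1$, valid here as $r\in\{2,p\}$). Once that reduction is in place the remainder is routine rearrangement of \eqref{starthere}.
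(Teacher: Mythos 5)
Your proof is correct and follows essentially the same route as the paper: both pick up from \eqref{starthere}, retain the term $\|\lambda f(0)-1\|_p^r$ rather than dropping it, reduce matters to the scalar estimate $1-|w-1|^r\leq r|w|$ at $w=\lambda f(0)$, and then cancel a factor of $|\lambda|$. The only difference is cosmetic: the paper gets that estimate from the mean value theorem ($1-a^r\leq r(1-a)$, combined with the reverse triangle inequality), whereas you use the reverse triangle inequality followed by Bernoulli's inequality $(1-t)^r\geq 1-rt$ --- the same convexity fact --- while also handling the case $|\lambda f(0)-1|\geq 1$ explicitly, which the paper's mean-value step leaves implicit.
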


\begin{proof} Starting from \eqref{starthere}, we have
\begin{align*}
       1  &\geq \| \lambda f(0) - 1\|_p^r + K_1 \| \lambda(f - f(0))\|_p^r + K_1 \|\lambda f\|_p^r   \\
       1-\| \lambda f(0) - 1\|_p^r  &\geq  + K_1 \| \lambda(f - f(0))\|_p^r + K_1 \|\lambda f\|_p^r   \\
      r\cdot 1^{r-1} (1-\| \lambda f(0) - 1\|_p)  &\geq  + K_1 \| \lambda(f - f(0))\|_p^r + K_1 \|\lambda f\|_p^r   \\
      r|\lambda f(0)|  &\geq  + K_1 \| \lambda(f - f(0))\|_p^r + K_1 \|\lambda f\|_p^r,
\end{align*}
in which we have employed the elementary inequality
\[
       \frac{b^r - a^r}{b-a} = rc^{r-1} \leq rb^{r-1}, 
\]
for $a<b$ and some $c \in (a,b)$, arising from the mean value theorem.  We thus obtain
\[
     |\lambda|^{r-1} \leq \frac{r|f(0)|}{K_1 \| (f - f(0))\|_p^r + K_1 \| f\|_p^r}.
\]
\end{proof}

The other Pythagorean inequality enables a lower bound for $|q_{0,p}[f]|$.  

\begin{Proposition}  Let $1<p<\infty$, and suppose that $f \in H^p$ satisfies $f(0)\neq 0$.  If $\lambda = q_{0,p}[f]$, then
     for any constant $c$ we have
     \[
          |\lambda|^s  \geq \frac{1 - \|cf-1\|_p^s}{K_2\big( \|f - f(0)\|_p^s +\|f\|_p^s  \big)},
     \]
     where $s$ and $K_2$ are the upper Pythagorean parameters.
\end{Proposition}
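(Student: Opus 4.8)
The plan is to run the same two-orthogonality argument that produced the upper bounds, but to feed the decompositions into the \emph{upper} Pythagorean inequality \eqref{pythag2} rather than the lower one \eqref{pythag1}. Every inequality then reverses direction, and we land on a lower estimate for $|\lambda|^s$. First I would record the two Birkhoff--James relations already in play. The relation $1 \perp_p z^k$ for $k \geq 1$ gives
\[
\lambda f(0) - 1 \ \perp_p\ \lambda\big(f - f(0)\big),
\]
and the extremal property of $\lambda$ gives $\lambda f - 1 \perp_p \lambda f$.

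Next, applying \eqref{pythag2} to the splitting $-1 = (\lambda f - 1) - \lambda f$ (legitimate since $\perp_p$ is linear in its second slot) yields
\[
1 = \|{-1}\|_p^s \leq \|\lambda f - 1\|_p^s + K_2\|\lambda f\|_p^s,
\]
while applying \eqref{pythag2} to $\lambda f - 1 = (\lambda f(0) - 1) + \lambda(f - f(0))$ yields
\[
\|\lambda f - 1\|_p^s \leq \|\lambda f(0) - 1\|_p^s + K_2\|\lambda(f - f(0))\|_p^s.
\]
Chaining these two displays produces
\[
1 \leq \|\lambda f(0) - 1\|_p^s + K_2\|\lambda(f - f(0))\|_p^s + K_2\|\lambda f\|_p^s.
\]

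The only term not already of the form $K_2|\lambda|^s\|\cdot\|_p^s$ is the constant $\|\lambda f(0) - 1\|_p^s$, and here is the one genuinely new step. Since $\lambda f(0) - 1$ is a constant, $\|\lambda f(0) - 1\|_p^s = |\lambda f(0) - 1|^s$; and because $\lambda f(0) - 1 = (\lambda f - 1)(0)$ is the value at the origin of an $H^p$ function, the elementary bound $|g(0)| \leq \|g\|_p$ (from the mean value property and Jensen's inequality) gives $|\lambda f(0) - 1| \leq \|\lambda f - 1\|_p$. The optimality of $\lambda$ then upgrades this to $\|\lambda f(0) - 1\|_p^s \leq \|cf - 1\|_p^s$ for every constant $c$. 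Substituting this into the chained inequality, factoring $|\lambda|^s$ out of the remaining two terms, and isolating $|\lambda|^s$ delivers exactly the claimed bound.

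I expect the crux to be precisely this control of $\|\lambda f(0) - 1\|_p^s$. In the companion lower-Pythagorean arguments this constant term sat on the favorable side of \eqref{starthere} and could simply be discarded; now it appears on the side that must be bounded \emph{from above}, so discarding it is no longer permitted. Recognizing it as a point evaluation at the origin and then invoking optimality is what forces the free parameter $c$ into the final estimate. I note in passing that feeding optimality directly into the first display, bypassing the second decomposition entirely, already yields the sharper bound $|\lambda|^s \geq (1 - \|cf - 1\|_p^s)/(K_2\|f\|_p^s)$; the stated form is the one that mirrors the earlier upper-bound propositions, trading strength for structural symmetry.
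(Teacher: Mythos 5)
Your proof is correct, and its skeleton matches the paper's: the same two orthogonality relations, the same two applications of the upper Pythagorean inequality \eqref{pythag2}, chained to give $1 \leq \|\lambda f(0)-1\|_p^s + K_2\|\lambda(f-f(0))\|_p^s + K_2\|\lambda f\|_p^s$, and the same use of optimality at the end. The divergence is exactly at the step you flag as the crux: controlling $\|\lambda f(0)-1\|_p^s$ by $\|cf-1\|_p^s$. The paper obtains $\|\lambda f(0)-1\|_p \leq \|\lambda f-1\|_p$ by citing \eqref{pythag1a}, i.e.\ the \emph{lower} Pythagorean inequality applied to the same decomposition, and then invokes optimality; you obtain the same intermediate inequality from the point-evaluation bound $|g(0)| \leq \|g\|_p$ for $g \in H^p$, and then invoke optimality. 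Both are valid. Your route is more elementary but Hardy-space-specific, since it uses that evaluation at the origin is contractive on $H^p$; the paper's stays entirely inside the Pythagorean framework of Theorem \ref{pythagthm}, which is the machinery that transfers to more general smooth Banach spaces where point evaluation need not be available. In fact, both arguments use more than is needed for this step: the relation $\lambda f(0)-1 \perp_p \lambda(f-f(0))$ already gives $\|\lambda f - 1\|_p \geq \|\lambda f(0)-1\|_p$ directly from the \emph{definition} of Birkhoff--James orthogonality, with no Pythagorean inequality at all. Finally, your closing observation is also correct: feeding optimality into $1 \leq \|\lambda f - 1\|_p^s + K_2\|\lambda f\|_p^s$ alone yields $|\lambda|^s \geq \bigl(1-\|cf-1\|_p^s\bigr)/\bigl(K_2\|f\|_p^s\bigr)$, which is at least as strong as the stated bound whenever the numerator is positive (and both bounds are vacuous otherwise).
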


\begin{proof}
The upper Pythagorean inequality \eqref{pythag2} provides that
\begin{align*}
    \|\lambda f - 1\|_p^s &\leq \| \lambda f(0) - 1\|_p^s + K_2 \| \lambda(f - f(0))\|_p^s \\
    \|1\|_p^s &\leq \| \lambda f - 1\|_p^s + K_2 \|\lambda f\|_p^s, 
\end{align*}
   where $s$ and $K_2$ are the upper Pythagorean parameters.

   Combining these yields that
   \[
      1 \leq \| \lambda f(0) - 1\|_p^s + K_2 \| \lambda(f - f(0))\|_p^s  + K_2 \|\lambda f\|_p^s.
   \]
   Apply \eqref{pythag1a} to get
   \[
         1 \leq \| \lambda f - 1\|_p^s + K_2 \| \lambda(f - f(0))\|_p^s  + K_2 \|\lambda f\|_p^s.
   \]
   Replacing $\lambda$ with $c$ makes the first term at least as large, so we obtain
    \[
         1 \leq \| c f - 1\|_p^s + K_2 \| \lambda(f - f(0))\|_p^s  + K_2 \|\lambda f\|_p^s,
   \]
   from which the claim follows.
   \end{proof}

In particular, we could choose $c = 1/f(0)$, and then this says
  \[
          |\lambda|^s  \geq \frac{1-  \|(f - f(0))/f(0)\|_p^s}{K_2\big( \|f - f(0)\|_p^s +\|f\|_p^s  \big)}.
     \]
Evidently this result has value only when $f$ has relatively little mass outside of its constant term.


\section{The Degree One OPA}\label{deg_1}

As seen in \cite{Cent}, it can be quite difficult to calculate OPAs for $p\neq 2$.  However, if one were to discover an efficient way to compute the coefficients, then the $H^p$ setting could have use in the design of digital filters.  The reason for this is because the zeros would have a different configuration, and thus the filter would amplify/attenuate different frequencies compared to the $p=2$ case.  Of course, the utility of the filter would rest on whether or not it is stable (i.e., zero-free in $\overline{\D})$.  Therefore, it seems sensible to start by providing estimates for the zeros.  Again, we note that by using the Pythagorean inequalities, we swap direct calculation for estimates. By doing so, we obtain expressions and bounds for the linear OPA root and leading coefficient.  The reason for considering the linear case is due to the following reduction.

\begin{Proposition}\label{linearsuffprop}
    Let $1<p<\infty$, $w \in \mathbb{D} \setminus \{0\}$, and $n \in \mathbb{N}$. 
    Then $w$ is the root of some OPA in $H^p$ if and only if $w$ is the root of some optimal linear approximant in $H^p$.
\end{Proposition}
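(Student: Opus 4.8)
The forward implication is immediate: an optimal linear approximant is by definition the OPA $q_{1,p}[g]$ (the case $n=1$), so any root of an optimal linear approximant is a root of an OPA. The substance of the statement is the reverse implication, and the plan is to collapse a high-degree OPA with a prescribed root down to a degree-one OPA with the \emph{same} root, by passing to a judiciously chosen symbol $g$.

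First I would record the projection viewpoint set up in the introduction. For $f \in H^p$ with $f(0) \neq 0$ we have $f \not\equiv 0$, so the multiplication map $q \mapsto qf$ is a linear bijection of $\Pol_n$ onto $\mathcal{V} := \operatorname{span}\{f, zf, \ldots, z^n f\}$; consequently $Qf$, where $Q := q_{n,p}[f]$, is precisely the metric projection of the constant function $1$ onto $\mathcal{V}$. (This also shows $Q \not\equiv 0$ exactly when $f(0)\neq 0$, so we are in the nontrivial case.) Now suppose $Q(w)=0$. Factor $Q(z)=(z-w)S(z)$ with $S \in \Pol_{n-1}$ and $S \not\equiv 0$, and set $g := Sf \in H^p$. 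Since $g \not\equiv 0$, the functions $g$ and $zg$ are linearly independent, and the subspace $\mathcal{W}:=\operatorname{span}\{g,zg\}$ satisfies $\mathcal{W}\subseteq \mathcal{V}$ (because $S, zS \in \Pol_n$). Crucially, $Qf=(z-w)g = zg - wg \in \mathcal{W}$.

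The key step is an elementary ``nested projection'' observation valid in any normed space: if the metric projection $u^\ast$ of $1$ onto the larger subspace $\mathcal{V}$ happens to lie in the smaller subspace $\mathcal{W} \subseteq \mathcal{V}$, then $u^\ast$ is also the metric projection of $1$ onto $\mathcal{W}$. Indeed, every $u \in \mathcal{W}$ lies in $\mathcal{V}$, so $\|1-u\|_p \geq \|1-u^\ast\|_p$, while $u^\ast \in \mathcal{W}$. Applying this with $u^\ast = Qf$, I conclude that $(z-w)g$ is the metric projection of $1$ onto $\mathcal{W}$. Transporting this back through the bijection $\Pol_1 \to \mathcal{W}$, $q \mapsto qg$ (a bijection since $g,zg$ are independent), and invoking uniqueness of the minimizer, guaranteed by uniform convexity of $H^p$, I obtain $q_{1,p}[g] = z-w$. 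This optimal linear approximant vanishes at $w$, completing the reverse implication.

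I expect the only real obstacle to be conceptual rather than computational: recognizing that the whole statement hinges on the nested-projection principle, after which the construction $g = Sf$ makes the two approximation problems literally share the same extremal vector. The remaining care is purely bookkeeping, namely verifying $g \not\equiv 0$ and the linear independence of $\{g, zg\}$ (so that $\Pol_1$ is faithfully identified with $\mathcal{W}$), and noting that uniqueness from uniform convexity is exactly what upgrades ``is a minimizer'' to ``equals $q_{1,p}[g]$''. Notably, nothing in this argument uses the Pythagorean inequalities or any special feature of $p=2$; it rests only on the metric-projection description of OPAs and on uniqueness, so it is robust across all $1<p<\infty$.
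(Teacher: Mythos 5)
Your proof is correct and takes essentially the same route as the paper: factor out the root, pass to the new symbol $g = Sf$, and note that the extremal property of $Qf$ over $\Pol_n f$ automatically restricts to the subspace $\Pol_1 g \subseteq \Pol_n f$, so $z-w$ is the optimal linear approximant to $1/g$. The paper states this restriction step in Birkhoff--James orthogonality language ($Qf - 1 \perp_{H^p} \Pol_n f \implies Qf - 1 \perp_{H^p} \Pol_1 Sf$), which by the definition of that orthogonality is exactly your nested-projection observation; your version just spells out the bookkeeping (nontriviality of $S$ and $g$, uniqueness via uniform convexity) more explicitly.
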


\begin{proof}
    Let $Q$ be the OPA of degree up to $n$ for some $1/f$, with $f \in H^p$ and $Q(w) = 0$.  Then $Q(z) = (a+bz)Q_0(z)$ for some $Q_0 \in \mathscr{P}_{n-1}$, where $-a/b = w$.   Then
    \begin{align*}
         Qf-1\ \ &\perp_{H^p}  \ \ \mathscr{P}_n f\ \ \ \implies\\
         (a + bz)Q_0(z)f(z) - 1 \ \ &\perp_{H^p} \ \ \mathscr{P}_1 Q_0 f\ \ \ \implies\\
    \end{align*}
    $a + bz$ is the optimal linear approximant to $1/(Q_0f)$.   
    The converse is trivial.
\end{proof}

If we are looking for bounds for the roots of OPAs, we may therefore restrict our attention to the linear ones.

Suppose $1<p<\infty$, and $w \in \mathbb{D}$. 
Let $D_w$ be the difference-quotient operator
\[
        (D_w f)(z) := \frac{f(z) - f(w)}{z-w},
\]
for all $f \in H^p$.  

Recall that 
\[
       D_w = B(I-wB)^{-1},
\]
and consequently
\[
     \|D_w\| \leq \frac{1}{1 - |w|}.
\]

Here is a bound for the root of a linear polynomial, based on boundedness of the difference-quotient.

\begin{Proposition}\label{fromdiffquot}
     Let $1<p<\infty$.  Let $f \in H^p$, and suppose 
     \[
          Q(z)  = Q_0\Big(1 - \frac{z}{z_0}\Big)
     \]
      for some $z_0 \in \mathbb{D}\setminus \{0\}$ and $Q_0 \neq 0$.  Then
     \[
        |z_0|  \geq  \frac{\|Q_0 f\|_p}{\|Q_0 f\|_p + \|Qf - 1\|_p}.
     \]
\end{Proposition}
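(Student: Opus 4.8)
The plan is to leverage the boundedness of the difference-quotient operator $D_{z_0}$, recorded just above, by applying it to the single function $Qf - 1$. Here $Q_0 = Q(0)$ is a nonzero scalar and $z_0 \in \mathbb{D}\setminus\{0\}$ is the unique root of the linear polynomial $Q$. Since $Q$ is a polynomial and $f \in H^p$, we have $Qf - 1 \in H^p$, and $D_{z_0}$ maps $H^p$ boundedly into itself with $\|D_{z_0}\| \leq 1/(1-|z_0|)$. The idea is to estimate $D_{z_0}(Qf-1)$ in two ways: compute it exactly on the one hand, and bound it via the operator norm on the other.

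First I would record that $Q(z_0) = 0$, so that $(Qf - 1)(z_0) = -1$. Forming the difference quotient, the constant $-1$ cancels against itself and one is left with $(D_{z_0}(Qf-1))(z) = Q(z)f(z)/(z - z_0)$. The crux is then the factorization
\[
        Q(z) = -\frac{Q_0}{z_0}\,(z - z_0),
\]
which cancels the denominator and produces the clean identity
\[
        D_{z_0}(Qf - 1) = -\frac{Q_0}{z_0}\, f.
\]
Taking $H^p$ norms and using that $Q_0$ is a scalar gives $\|D_{z_0}(Qf-1)\|_p = |z_0|^{-1}\,\|Q_0 f\|_p$.

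To finish, I would combine this identity with the operator-norm bound to obtain
\[
        \frac{1}{|z_0|}\,\|Q_0 f\|_p
        = \|D_{z_0}(Qf-1)\|_p
        \leq \frac{1}{1-|z_0|}\,\|Qf - 1\|_p,
\]
after which clearing denominators (both $|z_0|$ and $1-|z_0|$ are positive) and isolating $|z_0|$ is a purely algebraic rearrangement yielding the stated bound. I do not expect a genuine technical obstacle here; the only real subtlety is conceptual, namely the choice to apply $D_{z_0}$ to $Qf - 1$ rather than to $Qf$. That choice is what forces the relevant norm in the final estimate to be precisely $\|Qf - 1\|_p$—the quantity controlled by an optimal approximant—while ensuring the value cancelled at $z_0$ is the constant $-1$; applying $D_{z_0}$ to $Qf$ instead would surface $\|Qf\|_p$, which is not the quantity of interest.
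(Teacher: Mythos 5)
Your proof is correct and follows exactly the paper's argument: apply $D_{z_0}$ to $Qf-1$, use the vanishing of $Q$ at $z_0$ to compute the difference quotient exactly as $-Q_0 f/z_0$, and then invoke the bound $\|D_{z_0}\| \leq 1/(1-|z_0|)$ before rearranging. No gaps; your closing remark about why one applies $D_{z_0}$ to $Qf-1$ rather than $Qf$ is a nice observation, consistent with the paper's note that the extremal property of the OPA is never used here.
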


\begin{proof}
\begin{align*}
        \big(D_{z_0} (Qf-1)\big)(z)  &=   \frac{[Q(z)f(z) - 1]-[Q(z_0)f(z_0) - 1]}{z-z_0}  \\
             &=  \frac{Q(z)f(z)}{z-z_0} \\
             &=   \frac{Q_0 (z_0-z) f(z)}{z_0(z-z_0)} \\
             &=  -\frac{Q_0 f(z)}{z_0}.
\end{align*}

Consequently,
\begin{align*}
     \|\big(D_{z_0} (Qf-1)\big)(z) \|_p  &\leq \|D_{z_0}\| \|Qf-1\|_p \\
        \Big\|  \frac{Q_0 f(z)}{z_0}\Big\|_p   &\leq  \frac{1}{1 - |z_0|}  \|Qf-1\|_p, 
\end{align*}
from which the claim follows.
\end{proof}

Note that the statement and proof of Proposition \ref{fromdiffquot} make no use of the extremal nature of $q_{1,p}[f]$, and hence the conclusion holds for any linear polynomial $Q$.

Next we apply the Pythagorean inequalities to derive upper and lower bounds for the OPA root.

\begin{Proposition}
   Let $1<p<\infty$, and suppose that $f \in H^p$ with $f(0) \neq 0$.  If $Q(z) = a(z-w) = q_{1,p}[f]$, then
   \begin{align*}
        \Bigg(\frac{ \|a zf-1\|_p^2 -\|Qf-1\|_p^2 }{ K_1\|a f\|_p^2 }\Bigg)^{1/2}  &\geq |w| \geq  \Bigg(\frac{ \|a zf-1\|_p^p -\|Qf-1\|_p^p }{ K_2 \|a f\|_p^p}\Bigg)^{1/p},\ \,1<p<2\\
       \Bigg(\frac{ \|a zf-1\|_p^2 -\|Qf-1\|_p^2 }{ K_2\|a f\|_p^2 }\Bigg)^{1/2}  &\leq |w| \leq  \Bigg(\frac{ \|a zf-1\|_p^p -\|Qf-1\|_p^p }{ K_1 \|a f\|_p^p}\Bigg)^{1/p},\ \,2<p<\infty,
   \end{align*}
   where $K_1$ and $K_2$ are respectively the lower and upper Pythagorean constants.
\end{Proposition}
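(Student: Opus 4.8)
The plan is to extract everything from a single orthogonality relation coming from the extremality of $Q$, and then to feed the resulting decomposition into both Pythagorean inequalities of Theorem \ref{pythagthm}. Since $Q = q_{1,p}[f]$ is the metric projection of the constant $1$ onto $\Pol_1 f$, the error satisfies $Qf - 1 \perp_p \Pol_1 f$; in particular $Qf - 1 \perp_p f$. Because $\perp_p$ is linear in its second argument, this immediately upgrades to $Qf - 1 \perp_p awf$, since $awf$ is merely a scalar multiple of $f$.

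The algebraic observation that drives the argument is the identity
\[
    azf - 1 = (Qf - 1) + awf,
\]
which follows by writing $Q(z) = a(z-w) = az - aw$, so that $Qf = azf - awf$. Setting $x = Qf - 1$ and $y = awf$, we therefore have $x \perp_p y$ and $x + y = azf - 1$, with $\|y\|_p = \|awf\|_p = |w|\,\|af\|_p$ (note $a \neq 0$, as is implicit in the factored form, so the normalizations below are legitimate). I would then apply the lower inequality \eqref{pythag1} and the upper inequality \eqref{pythag2} to this pair, in each case substituting $\|y\|_p = |w|\,\|af\|_p$ and solving for $|w|$.

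The only genuine content is the bookkeeping of which exponent and which constant belongs to which bound. When $1<p<2$, the lower inequality carries exponent $r=2$ with $K_1 = p-1$, and rearranging $\|azf-1\|_p^2 \geq \|Qf-1\|_p^2 + K_1|w|^2\|af\|_p^2$ produces the upper bound for $|w|$; the upper inequality carries exponent $s=p$ with $K_2 = 1/(2^{p-1}-1)$ and yields the lower bound. When $2<p<\infty$ the exponents attached to the two inequalities are swapped ($r=p$ for the lower, $s=2$ for the upper), and this swap is exactly what interchanges the roles of the two displayed estimates and of the two constants. The quantities under the roots are legitimately nonnegative: each upper-bound numerator is nonnegative directly from the lower Pythagorean inequality, while each lower-bound numerator $\|azf-1\|_p^s - \|Qf-1\|_p^s$ is nonnegative because $az \in \Pol_1$, so optimality of $Q$ forces $\|azf-1\|_p \geq \|Qf-1\|_p$.

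I anticipate no serious obstacle once the identity $azf-1 = (Qf-1)+awf$ and the orthogonality $Qf-1 \perp_p f$ are in hand; the proof is then a direct two-line application of Theorem \ref{pythagthm} followed by isolating $|w|$. The one place that requires care is matching the pair $(r,K_1)$ for the lower inequality and $(s,K_2)$ for the upper inequality to the correct range of $p$, so that the square-root exponents and the constants land on the correct side of each estimate.
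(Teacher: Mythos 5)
Your proposal is correct and follows essentially the same route as the paper's own proof: the identity $azf-1 = (Qf-1) + awf$, the orthogonality $Qf-1 \perp_p f$ (hence $\perp_p awf$) from extremality, and then both Pythagorean inequalities with the exponent/constant bookkeeping split by the range of $p$. Your added check that the numerators are nonnegative (via optimality of $Q$ against the competitor $az$) is a small refinement the paper leaves implicit.
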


\begin{proof}
If $2<p<\infty$,  then the Pythagorean inequalities are
\begin{align*}
   \| x + y \|^p_p & \geq  \|x\|^p_p + K_1 \|y\|^p_p \\
    \| x + y \|^2_p & \leq  \|x\|^2_p + K_2\|y\|^2_p.
\end{align*}

The condition
\[
         Qf - 1 = a(z - w)f -1  \ \  \perp_p  \ \  f
\]
then enables us to write
\begin{align*}
     a zf -1  &=  a zf - a w f - 1 +a w f \\
     \|a zf-1\|_p^p &\geq \|Qf-1\|^p_p + K_1 \|w a f\|_p^p \\
     |w|^p  &\leq  \frac{ \|a zf-1\|_p^p- \|Qf-1\|^p_p }{ K_1 \|a  f\|_p^p },
\end{align*}
and
\begin{align*}
     \|a zf-1\|_p^2 &\leq \|Qf-1\|^2_p + K_2 \|a w f\|_p^2 \\
     |w|^2  &\geq \frac{  \|a zf-1\|_p^2- \|Qf-1\|^2_p}{ K_2 \|a  f\|_p^2 }.
\end{align*}

Hence we have
\[
       \Bigg(\frac{ \|a zf-1\|_p^2 -\|Qf-1\|_p^2 }{ K_2\|a f\|_p^2 }\Bigg)^{1/2}  \leq |w| \leq  \Bigg(\frac{ \|a zf-1\|_p^p -\|Qf-1\|_p^p }{ K_1 \|a f\|_p^p}\Bigg)^{1/p}.
\]
The steps are similar when $1<p<2$.  
\end{proof}

These bounds are limited by the fact that they depend on the OPA itself.  Nonetheless, we will see that such results have their uses.

Here follows a bound for the coefficient $a$.

\begin{Proposition}\label{aboundfora}
    Let $1<p<\infty$, and suppose that $f \in H^p$ with $f(0)\neq 0$. If $q_{1,p}[f] = a(z-w)$,
    then \[
    |a|^{r-1} \leq \frac{r|wf(0)|}{K_1 \|(z-w)f(z)+wf(0)\|_p^r + K_1\|(z-w)f(z)\|_p^r},
\]
where $r$ and $K_1$ are the lower Pythagorean parameters.
\end{Proposition}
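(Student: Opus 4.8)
The plan is to reduce this degree-one estimate to the degree-zero bound \eqref{ublamb2} that is already established. Set $g := (z-w)f$, which lies in $H^p$ because multiplication by a polynomial is bounded on $H^p$, and record that $g(0) = -wf(0)$; this is nonzero precisely when $w\neq 0$, which is the case of interest (if $w=0$ the right-hand side vanishes and the statement is degenerate). Writing $Q = a(z-w)$ we have $Qf - 1 = ag - 1$, so the inequality we want is exactly the degree-zero bound applied to $g$ in place of $f$, once we know that $a$ is the constant OPA for $g$.

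The key step is therefore the identification $a = q_{0,p}[g]$. This follows from the extremal characterization of the OPA exactly as in Proposition \ref{linearsuffprop}: optimality of $q_{1,p}[f]$ means $Qf - 1 \perp_p \Pol_1 f$, and since $g = (z-w)f \in \Pol_1 f$ while $\perp_p$ is linear in its second argument, we obtain $ag - 1 = Qf - 1 \perp_p g$. By Theorem \ref{praccritperp} this is precisely the Birkhoff--James condition characterizing the metric projection of $1$ onto $\operatorname{span}\{g\}$; the unique nearest point property then forces the realizing coefficient to be unique, so $a = q_{0,p}[g]$. I expect this identification---that fixing the optimal root $w$ converts the leading coefficient into a genuine degree-zero OPA for the shifted function $g$---to be the only real content; everything after it is substitution.

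With this in hand I would apply the previous proposition, inequality \eqref{ublamb2}, verbatim with $f$ replaced by $g$, obtaining
\[
     |a|^{r-1} \leq \frac{r\,|g(0)|}{K_1\|g - g(0)\|_p^r + K_1\|g\|_p^r}.
\]
Substituting $g(0) = -wf(0)$, $g = (z-w)f$, and $g - g(0) = (z-w)f + wf(0)$ then yields the claimed bound, since $|g(0)| = |wf(0)|$.

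Should a self-contained argument be preferred, the same conclusion comes from rerunning the proof of \eqref{ublamb2} with $g$ in place of $f$. The two ingredients are: (i) $ag(0) - 1 \perp_p a(g - g(0))$, because a nonzero constant is $\perp_p$ every $H^p$ function vanishing at the origin (its mean is zero), and (ii) the extremal relation $ag - 1 \perp_p ag$. Feeding these into the lower Pythagorean inequality \eqref{pythag1} gives $1 \geq \|ag(0)-1\|_p^r + K_1\|a(g-g(0))\|_p^r + K_1\|ag\|_p^r$, after which the mean value estimate $1 - t^r \leq r(1-t)$ for $t\in[0,1]$ and the reverse triangle inequality $1 - |ag(0)-1| \leq |ag(0)|$ produce $r|a||g(0)| \geq K_1|a|^r\big(\|g-g(0)\|_p^r + \|g\|_p^r\big)$; dividing by $|a|$ and substituting for $g$ finishes the estimate.
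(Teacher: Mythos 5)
Your proof is correct and follows essentially the same route as the paper: the paper's proof is precisely the observation that $a = q_{0,p}[(z-w)f]$ together with an application of \eqref{ublamb2} to $g = (z-w)f$, using $g(0) = -wf(0)$. Your only addition is to spell out the justification of the identification $a = q_{0,p}[g]$ via the orthogonality relation $Qf - 1 \perp_p g$ and uniqueness of the metric projection (which the paper asserts without comment), so this is the same argument, just with the implicit step made explicit.
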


\begin{proof}
Notice that $a = q_{0,p}[(z-w)f]$.  Thus the claim follows from \eqref{ublamb2}, being that
$(z-w)f(z)$ takes the value $-wf(0)$ when $z=0$.
\end{proof}

We close this section with some formulas for the OPA root $w$ and coefficient $a$, made possible by the extremal property of the OPA, and the associated orthogonality relations.  These will be useful when we seek bounds for $w$ in the concluding section.

\begin{Theorem}\label{formulasforw}
Let $1<p<\infty$, and suppose that $f \in H^p$ satisfies $\|f\|_p = 1$ and $f(0)\neq 0$.  Let $Q(z) = q_{1,p}[f](z) = a(z-w)$, and define
\begin{align*}
    A &=  \int |Qf-1|^{p-2}\bar{f}\,dm   \\
    B &=    \int |Qf-1|^{p-2}\overline{zf}\,dm \nonumber  \\
    C &=    \int |Qf-1|^{p-2}|f|^2\,dm  \nonumber   \\
    D &=         \int |Qf-1|^{p-2}\bar{z} |f|^2\,dm, \nonumber 
\end{align*}
provided that the integrals exist.

Then we have
\begin{align}
      w &= \frac{B\overline{D}-AC}{BC - AD} \label{dubform} \\
      w&= \frac{a\bar{D}-A}{aC} \label{dubform2} \\
      w &= \frac{aC-B}{aD} \label{dubform3}\\
      a &= \frac{B-wA}{2C -w\bar{D} -\bar{w}D} \nonumber\\
      a &= \frac{AD-BC}{|D|^2 - C^2}.\nonumber
\end{align} 
\end{Theorem}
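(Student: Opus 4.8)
The plan is to reduce the entire theorem to two scalar equations that encode optimality of $Q$, and then to read off all five identities by elementary linear algebra. Since $q_{1,p}[f]$ is the metric projection of the constant function $1$ onto the subspace spanned by $f$ and $zf$, its extremal property is equivalent to $Qf-1\perp_p f$ and $Qf-1\perp_p zf$. Applying the Birkhoff--James criterion (Theorem \ref{praccritperp}) to each of these, I would first record that
\[
\int |Qf-1|^{p-2}\,\overline{(Qf-1)}\,f\,dm=0\qquad\text{and}\qquad \int |Qf-1|^{p-2}\,\overline{(Qf-1)}\,zf\,dm=0.
\]

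Next I would substitute $Qf-1=azf-awf-1$, expand, and use that $z\bar z=1$ on the unit circle. Taking the complex conjugate of each relation and sorting the terms against the four defining integrals turns them into the two ``master equations''
\[
a\overline{D}-awC=A\qquad\text{and}\qquad aC-awD=B.
\]
I expect this conversion to be the main obstacle, not because it is deep but because it is delicate: James' criterion supplies the factor $\overline{(Qf-1)}$, so one must conjugate the whole identity and invoke $z\bar z=1$ in order to land precisely on $A,B,C,D$ as defined rather than on their conjugates. Keeping $D$ separate from $\overline{D}$ and $w$ separate from $\bar w$ is exactly where a careless computation goes astray. (The normalization $\|f\|_p=1$ plays no role in this algebra; it is carried only for convenience.)

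With the master equations established, the five formulas follow mechanically. Reading the pair as a linear system in the two unknowns $a$ and $aw$, the coefficient determinant equals $C^2-|D|^2$, and Cramer's rule yields simultaneously $a=\frac{AD-BC}{|D|^2-C^2}$ together with $aw=\frac{B\overline{D}-AC}{C^2-|D|^2}$; dividing the latter by the former gives $w=\frac{B\overline{D}-AC}{BC-AD}$, which is \eqref{dubform}. Isolating $w$ in the first master equation alone produces \eqref{dubform2}, and isolating it in the second produces \eqref{dubform3}. Finally, substituting $A=a\overline{D}-awC$ and $B=aC-awD$ into the combination $B-wA$ and simplifying isolates the remaining closed form for $a$.

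I would close by recording the nondegeneracy caveats under which the quotients make sense: the hypothesis $f(0)\neq0$ guarantees $Q\not\equiv0$, hence $a\neq0$ and $w\neq0$; the four integrals are assumed to exist; and one needs the denominators $C^2-|D|^2$, $aC$, and $aD$ to be nonzero, which holds away from the degenerate configurations. Under these provisos each displayed expression is well defined and equal to the corresponding quantity.
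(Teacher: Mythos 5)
Your proposal follows the paper's own route exactly: the extremal property gives $Qf-1\perp_p f$ and $Qf-1\perp_p zf$, James' criterion turns these into two integral identities, and conjugating together with $z\bar z=1$ on the circle produces precisely your two master equations $a\bar D-awC=A$ and $aC-awD=B$; the paper then, like you, solves this $2\times2$ system (Cramer's rule for \eqref{dubform} and the last formula) and isolates $w$ in each single equation for \eqref{dubform2} and \eqref{dubform3}. On this common ground your linear algebra is actually cleaner than the paper's: the coefficient determinant is indeed $C^2-|D|^2$ (recall $C$ is real), whereas the paper's displayed inverse carries the incorrect factor $1/(|C|^2+|D|^2)$ --- a slip that cancels in the ratio giving \eqref{dubform}, and which the paper's stated formula for $a$ silently corrects.

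The one substantive problem is the fourth identity. You assert that substituting the master equations into $B-wA$ ``isolates the remaining closed form for $a$.'' It does not: that substitution gives
\[
B-wA \;=\; a\bigl[(1+w^2)C - wD - w\bar D\bigr],
\]
whose denominator is not the theorem's $2C - w\bar D - \bar w D$. In fact the fourth formula as printed is false in general. Take $p=2$ and $f(z)=\tfrac12+z$ (normalizing $f$ changes nothing relevant): then $A=\tfrac12$, $B=0$, $C=\tfrac54$, $D=\tfrac12$, and the OPA is $q_{1,2}[f](z)=-\tfrac{4}{21}\bigl(z-\tfrac52\bigr)$, so $B-wA=-\tfrac54\neq 0$ while $2C-w\bar D-\bar w D=0$. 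The identity that actually follows from the orthogonality $Qf-1\perp_p (z-w)f$ is
\[
a \;=\; \frac{B-\bar w A}{(1+|w|^2)C - wD - \bar w\bar D},
\]
which agrees with your substitution formula but not with the printed one. The paper's own proof never verifies this formula (it derives \eqref{dubform} and says ``the rest is similar''), so the defect originates in the statement rather than in your method; still, a complete write-up should derive the corrected identity, or at least flag the discrepancy, rather than claim the mechanical substitution lands on the formula as stated.
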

   
Note:  These formulas always hold if $2<p<\infty$, and hold for $1<p<2$ as well if $\|Qf\|_{\infty} <1$.  
   
\begin{proof}
    Use the orthogonality properties of $Q$, along with Theorem \ref{praccritperp}.

    Suppose that $q_{1,p}[f] = a(z-w)$.  The associated orthogonality equations tell us that
\begin{align*}
     \int|1 - Q f|^{p-2} (1 - Qf) \overline{f} \,dm &=  0\\
     \int|1 - Q f|^{p-2} (1 - Qf) \overline{zf} \,dm &=  0.
\end{align*} 
Providing that the separate integrals exit, we can rearrange these to get  
\begin{align*}
     \int|1 - Q f|^{p-2} f\,dm &= -\overline{aw} \int|1 - Q f|^{p-2} |f|^2\,dm +\bar{a} \int|1 - Q f|^{p-2} \bar{z} |f|^2\,dm \\
      \int|1 - Q f|^{p-2}z f\,dm &= -\overline{aw} \int|1 - Q f|^{p-2}z |f|^2\,dm +\bar{a} \int|1 - Q f|^{p-2} |f|^2\,dm.
\end{align*}
This linear system can be solved for $a$ and $w$.  The result is
\begin{align}
       \left[\begin{array}{c} -aw \\ a \end{array}\right] &= \left[ \begin{array}{cc} C & \bar{D} \\D & C   \end{array} \right]^{-1}
       \left[\begin{array}{c}  A \\ B \end{array}\right]  \label{recursivescheme} \\
       & \nonumber  \\ 
             &= \frac{1}{|C|^2 + |D|^2}\left[ \begin{array}{rr} C & -\bar{D} \\ -D & C   \end{array} \right]
       \left[\begin{array}{c}  A \\ B \end{array}\right]  \nonumber  \\
       &  \nonumber \\
       &= \frac{1}{|C|^2 + |D|^2}
       \left[\begin{array}{c}  AC-B\bar{D} \\ BC-AD \end{array}\right].  \nonumber 
\end{align}
The formula \eqref{dubform} then derives from 
\[
     w = -\frac{-aw}{a} = - \frac{(AC-B\bar{D})/(|C|^2 + |D|^2)}{(BC-AD)/(|C|^2 + |D|^2)},
\] and the rest is similar.
\end{proof}


\section{OPA Roots Are Bounded From Zero}\label{roots}

When $p=2$, the root of a non-trivial linear OPA must lie outside the unit disk \cite{MR3614926}. When $p\neq 2$, we are able to show that in two special cases the root of a linear OPA must be bounded from the origin.  We suspect that this is true in general, but this seems beyond the reach of our present methods.  We start with the following observation.

\begin{Proposition}
    Let $1<p<\infty$.  Suppose that $f$ is a cyclic function in $H^p$ with $f(0)\neq 0$, and $\{n_k\}_{k\in\mathbb{N}}$ is an increasing sequence of non-negative integers.  If $\{z_{n_k}\}_{k\in\mathbb{N}}$ is a sequence of numbers in $\mathbb{D}$ such that $q_{n_k,p}[f](z_{n_k})=0$, then $|z_{n_k}|\rightarrow 1$ as $k\rightarrow\infty$.  
\end{Proposition}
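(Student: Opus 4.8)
The plan is to exploit a single rigid identity---that the error function $Q_kf-1$ is pinned to the value $-1$ at any zero of $Q_k$---and to play it against the uniform boundedness of point evaluations on compact subsets of $\mathbb{D}$. Throughout write $Q_k := q_{n_k,p}[f]$.

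First I would extract the quantitative content of cyclicity. Saying that $f$ is cyclic means the closed linear span of $\{f,zf,z^2f,\dots\}$ is all of $H^p$, so in particular $1$ lies in that span and can be approximated in $\|\cdot\|_p$ by polynomial multiples of $f$. Since $q_{n,p}[f]$ is by definition the minimizer of $\|qf-1\|_p$ over $q\in\Pol_n$, the error $\|q_{n,p}[f]f-1\|_p$ is nonincreasing in $n$ with infimum $0$; hence $\|Q_kf-1\|_p\to 0$ as $k\to\infty$. Next I would record the standard estimate for point evaluations in $H^p$: there is a constant $C_\rho$, depending only on $\rho$ and $p$, such that
\[
   |g(z)| \le C_\rho\,\|g\|_p \qquad \text{for all } g\in H^p \text{ and all } |z|\le\rho<1.
\]
This follows, for instance, from the subharmonicity of $|g|^p$ and its harmonic (Poisson) majorant, which yields $|g(z)|\le (2/(1-|z|))^{1/p}\|g\|_p$ and hence the displayed bound uniformly on $\{|z|\le\rho\}$.

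The argument then proceeds by contradiction. Suppose $|z_{n_k}|\not\to 1$. Passing to a subsequence, which I relabel, there is $\rho<1$ with $|z_{n_k}|\le\rho$ for all $k$. On one hand, since $Q_k(z_{n_k})=0$,
\[
   (Q_kf-1)(z_{n_k}) = Q_k(z_{n_k})\,f(z_{n_k}) - 1 = -1,
\]
so $|(Q_kf-1)(z_{n_k})|=1$ for every $k$. On the other hand, applying the evaluation bound at $z_{n_k}\in\{|z|\le\rho\}$ to the function $Q_kf-1\in H^p$ gives
\[
   |(Q_kf-1)(z_{n_k})| \le C_\rho\,\|Q_kf-1\|_p \longrightarrow 0.
\]
These are incompatible, so no such subsequence exists and $|z_{n_k}|\to 1$.

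I do not anticipate a genuine obstacle: the entire argument turns on the observation that the error equals $-1$ at a root, together with two standard facts (convergence $\|Q_kf-1\|_p\to 0$ from cyclicity, and uniform boundedness of point evaluations on compacta). The only point requiring care is making the evaluation bound \emph{uniform} over a fixed compact set containing all the $z_{n_k}$, which is precisely what passing to the subsequence with $|z_{n_k}|\le\rho$ secures.
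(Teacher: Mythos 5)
Your proof is correct, and it is essentially a self-contained version of the paper's argument. The paper's proof is two lines: it cites \cite[Proposition 5.1]{Cent}, which gives the quantitative bound $\sqrt{1-\|q_{n_k,p}[f]f-1\|_p^p}\leq |z_{n_k}|$ at any root, and then invokes cyclicity to send $\|q_{n_k,p}[f]f-1\|_p$ to zero, exactly as you do. What you have done differently is reprove the content of that cited proposition rather than import it: your observation that $(Q_kf-1)(z_{n_k})=-1$ combined with the point-evaluation estimate is precisely the mechanism behind the cited bound (with the sharp constant $|g(z)|\leq (1-|z|^2)^{-1/p}\|g\|_p$ one recovers it verbatim, since $1\leq (1-|z_{n_k}|^2)^{-1/p}\|Q_kf-1\|_p$ rearranges to the paper's inequality). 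The trade-off is minor: the paper's citation yields an explicit rate at which the roots approach the circle in terms of the approximation error, while your subsequence-and-contradiction formulation is qualitative but needs no external reference and makes the underlying rigidity (the error is pinned at $-1$ on the zero set of $Q_k$) transparent. Both arguments use cyclicity in the identical way, via monotonicity of the optimal errors; note only that this step silently uses $n_k\to\infty$, which is guaranteed because an increasing sequence of non-negative integers is unbounded.
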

\begin{proof}
From \cite[Proposition 5.1]{Cent}, we have that 
\begin{equation*}
    \sqrt{1-\|q_{n,k}[f]f-1\|_p^p}\leq |z_{n_k}|\leq 1
\end{equation*}
for each $k\in\mathbb{N}$.  Now, since $f$ is cyclic in $H^p$, it follows that $\|q_{n_k,p}[f]f-1\|_p^p\rightarrow 0$ as $k\rightarrow\infty$.  Therefore, the result follows.
\end{proof}

Next, we list the two main results.

\vspace{.5pc}

\noindent \textbf{Theorem \ref{rootsbddinnerf}.} \textit{    Suppose that $1<p<\infty$.
    Let $f \in H^p$ be an inner function such that $f(0) \neq 0$.  There exists a constant $\rho>0$, depending only on $p$, such that if $q_{1,p}[f](z) = a(z-w)$, then $|w| \geq \rho$.}

\vspace{.5pc}

\vspace{.5pc}

\noindent \textbf{Theorem \ref{rootsbddevenint}.} \textit{Suppose that $p>2$ is an even integer.  Let $f \in H^p$ satisfy $f(0) \neq 0$.  There exists  $\rho>0$, depending only on $p$, such that if    $q_{1,p}[f](z) = a(z-w)$, then $|w| \geq \rho$.}

\vspace{.5pc}

Both of the above theorems will be established via proofs by contradiction.  We will therefore assume, for the sake of argument, that there is a sequence of functions $f_k \in H^p$ such that $\|f_k\|_p = 1$ and $f_k(0)\neq 0$ for each $k$, for which the respective linear OPAs $q_{1,p}[f_k] = a_k(z-w_k)$ satisfy $\lim_{k\rightarrow\infty} w_k = 0$.  We will show, in a set of lemmas, that the corresponding parameters $\lambda_k = q_{0,p}[f_k]$, $a_k$, and $f_k(0)$ must also tend toward zero at certain rates. Those rates will turn out to be incommensurate.  This contradiction will show that the OPA roots $w_k$ cannot be made arbitrarily close to zero.

For ease of reading we will suppress the index $k$ in the above structures, and instead bear in mind throughout that $f(0)$, $\lambda$, $a$, and $w$ are dependent on $f$.
Since the OPA root is independent from rescaling $f$, there is no harm in assuming that $\|f\|_p = 1$. 

In what follows, we will also employ the binomial approximation in the forms
\begin{align*}
      (1 + |x|)^{\alpha}  &\geq  1+(\alpha/2) |x| \\
      (1 - |x|)^{\alpha}  &\leq  1-(\alpha/2) |x| \\
      (1 - |x|)^{\alpha}  &\geq  1-(2\alpha) |x| 
\end{align*}
for $|x|$ sufficiently small.  The factors of 2 are intended to absorb the error, which is on the order of $|\alpha x|^2$.

The first lemma arises from boundedness of the difference-quotient operator on $H^p$.
\begin{Lemma}\label{fromdiffquotient2}
    Let $1<p<\infty$, and let $f \in H^p$ with $f(0) \neq 0$.  If $0<|w|<1$, and  $Q(z)= a(z-w)$, then 
    \[
         \|Qf\|_p \geq (1 - |w|)|a|\|f\|_p.
    \]
\end{Lemma}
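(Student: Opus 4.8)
The plan is to apply the difference-quotient operator $D_w$ directly to the product $Qf$, exploiting the key fact that this product vanishes at the point $w$. This mirrors the argument of Proposition \ref{fromdiffquot}, but here the zero of $Qf$ at $w$ makes the difference quotient collapse to something especially simple. First I would note that $Qf = a(z-w)f \in H^p$, since multiplication by the polynomial $Q$ carries $H^p$ into itself (indeed $\|zg\|_p = \|g\|_p$, so $z$ is a bounded multiplier, and the constant multipliers are trivially bounded). Thus $D_w$ may legitimately be applied to $Qf$.

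Next I would compute the difference quotient. Since $(Qf)(w) = a(w-w)f(w) = 0$, we obtain
\[
\big(D_w(Qf)\big)(z) = \frac{(Qf)(z) - (Qf)(w)}{z-w} = \frac{a(z-w)f(z)}{z-w} = a f(z).
\]
In other words, $D_w(Qf) = af$ identically.

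Finally I would invoke the operator-norm estimate $\|D_w\| \leq \tfrac{1}{1-|w|}$ recalled just before Proposition \ref{fromdiffquot}. Applying it to $Qf$ yields
\[
|a|\,\|f\|_p = \|af\|_p = \|D_w(Qf)\|_p \leq \|D_w\|\,\|Qf\|_p \leq \frac{1}{1-|w|}\,\|Qf\|_p.
\]
Rearranging this inequality (multiplying through by $1-|w| > 0$) gives precisely $\|Qf\|_p \geq (1-|w|)|a|\,\|f\|_p$, as desired.

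There is essentially no serious obstacle here: the entire argument rests on the single observation that $Qf$ has a zero at $w$, which removes the factor $z-w$ and leaves $af$. The only point requiring a word of care is confirming that $Qf$ lies in $H^p$ so that $D_w$ applies with the stated norm bound, and this is immediate. Note also that, as with Proposition \ref{fromdiffquot}, the statement uses nothing about the extremal nature of any OPA, so it holds for an arbitrary linear factor $a(z-w)$ with $0<|w|<1$.
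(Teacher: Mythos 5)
Your proof is correct and follows exactly the paper's own argument: compute $D_w(Qf) = af$ using the fact that $Qf$ vanishes at $w$, then apply the operator bound $\|D_w\| \leq \tfrac{1}{1-|w|}$ and rearrange. No differences worth noting.
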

\begin{proof}
  Consider again the difference-quotient operator $D_w$ in $H^p$.  
\begin{align}
     D_w (Qf) &=  \frac{a(z-w)f - a(w-w)f(w)}{z-w} \nonumber \\
     &= \frac{a(z-w)f - 0}{z-w} \nonumber \\
          &=  af \nonumber \\
          \|D_w\| \|Qf\|_p  &\geq \|af\|_p \nonumber\\
          \|Qf\|_p  &\geq (1 - |w|)\|af \|_p \nonumber\\
          &= (1 - |w|)|a|\|f\|_p. \label{fromdq22}
\end{align}
\end{proof}

\begin{Lemma}\label{pumpkinpie2}
   Let $1<p<\infty$, $p\neq 2$, and let $f \in H^p$ with $f(0) \neq 0$.  If $Q(z) =q_{1,p}[f](z)= a(z-w)$, then 
        \begin{align*}
        |w|^p &\geq (pK_1/K_2)|a|^{2-p}\ \,\ 1<p<2\\
       |w|^2  &\geq (pK_1/4 K_2)|a|^{p-2},\ \,\ 2<p<\infty
\end{align*}
for $|w|$ sufficiently small.
\end{Lemma}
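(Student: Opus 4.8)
The plan is to reduce the claim to three elementary norm estimates followed by a single binomial conversion, treating the ranges $1<p<2$ and $2<p<\infty$ as mirror images of one another. Throughout I write $r$ for the lower Pythagorean exponent and $s$ for the upper one, so that \eqref{pythag1} and \eqref{pythag2} read $\|f+g\|_p^r\ge\|f\|_p^r+K_1\|g\|_p^r$ and $\|f+g\|_p^s\le\|f\|_p^s+K_2\|g\|_p^s$ whenever $f\perp_p g$; here $(r,s)=(2,p)$ when $1<p<2$ and $(r,s)=(p,2)$ when $2<p<\infty$. Since $\|f\|_p=1$ we have $\|af\|_p=\|azf\|_p=|a|$, and in both cases the target is equivalent to a lower estimate of the form $\|azf-1\|_p^s-\|Qf-1\|_p^s\ge c_p\,K_1|a|^r$ for a constant $c_p$ depending only on $p$.

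First I would record a lower bound for $\|azf-1\|_p$. Because $\int z^k\,dm=0$ for $k\ge1$ and $f\in H^p$ has no negative Fourier modes, Theorem \ref{praccritperp} gives $1\perp_p zf$, hence $1\perp_p azf$; applying the lower Pythagorean inequality to $1+(-azf)$ yields
\[
\|azf-1\|_p^r\;\ge\;1+K_1|a|^r .
\]
Next I would bound $\|Qf-1\|_p$ from above. As $Q=q_{1,p}[f]$, the residual $1-Qf$ is orthogonal to all of $\Pol_1 f$, so in particular $1-Qf\perp_p Qf$, and the lower Pythagorean inequality applied to $(1-Qf)+Qf=1$ gives $1\ge\|Qf-1\|_p^r+K_1\|Qf\|_p^r$. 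Feeding in the difference-quotient estimate $\|Qf\|_p\ge(1-|w|)|a|$ from Lemma \ref{fromdiffquotient2} produces
\[
\|Qf-1\|_p^r\;\le\;1-K_1(1-|w|)^r|a|^r .
\]
Finally, the splitting $azf-1=(Qf-1)+awf$, with $(Qf-1)\perp_p awf$ since $awf$ is a scalar multiple of $f$, together with the upper Pythagorean inequality gives $\|azf-1\|_p^s\le\|Qf-1\|_p^s+K_2|a|^s|w|^s$, that is
\[
|w|^s\;\ge\;\frac{\|azf-1\|_p^s-\|Qf-1\|_p^s}{K_2|a|^s}.
\]

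To finish, I would pass from $r$-th powers to $s$-th powers in the first two estimates using the binomial approximations, which are legitimate because $w\to0$ forces $|a|\to0$ (otherwise the last display keeps $|w|$ away from $0$, as $r-s$ has the sign making $|a|^{r-s}$ bounded below). Raising the two bounds to the power $s/r$ and invoking $(1+|x|)^{s/r}\ge1+\tfrac{s}{2r}|x|$ and $(1-|x|)^{s/r}\le1-\tfrac{s}{2r}|x|$ gives
\[
\|azf-1\|_p^s-\|Qf-1\|_p^s\;\ge\;\tfrac{s}{2r}K_1|a|^r\bigl[1+(1-|w|)^r\bigr],
\]
and since $1+(1-|w|)^r\to2$ as $w\to0$ the bracket may be replaced by a constant near $2$ for $|w|$ small. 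Dividing by $K_2|a|^s$ and recalling that $r-s$ equals $2-p$ when $1<p<2$ and $p-2$ when $2<p<\infty$ produces a lower bound of exactly the announced shape, $|w|^s\ge c_p(K_1/K_2)|a|^{r-s}$.

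The main obstacle is to carry the multiplicative constant $c_p$ through the two lossy steps—the binomial approximation and the replacement of $1+(1-|w|)^r$ by its limit $2$—tightly enough to land on the stated values, choosing the smallness threshold for $|w|$ (equivalently for $|a|$) so that all error terms, of order $|a|^{2r}$ and $|w|$, are dominated. The two ranges must be run separately, since passing from $p$ to $2$ interchanges $r$ and $s$ and thereby swaps which Pythagorean inequality is the ``square'' one and which is the ``$p$-th power'' one; this is also what forces the distinct exponents $|w|^p$ versus $|w|^2$ and the distinct constants in the two cases.
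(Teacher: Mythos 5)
Your argument is correct and is essentially the paper's own proof: the same orthogonality relations ($1-Qf\perp_p Qf$, $Qf-1\perp_p awf$, $1\perp_p azf$), the same splitting $azf-1=(Qf-1)+awf$, Lemma \ref{fromdiffquotient2}, and the upper/lower Pythagorean inequalities finished off by the binomial approximation (the stated constants need not be matched exactly, since the paper treats them as unimportant and its own proof does not reproduce them either). The only real deviation is how the smallness of $|a|$ is justified---the paper gets it from the root bound of Proposition 5.1 of \cite{Cent} together with \eqref{fromlower22}, whereas you extract it internally; just phrase that step non-circularly: when $|a|\geq\delta$, your steps 1--3 \emph{without} any binomial approximation already give $K_2|a|^s|w|^s\geq\bigl(1+K_1\delta^r\bigr)^{s/r}-1>0$ with $|a|$ bounded above (by step 2), so $|w|$ stays away from $0$; as written, your parenthetical leans on the already-approximated final bound, whose validity is what is being established.
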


\begin{proof}
Let us proceed with the case $2<p<\infty$, the other case being similar.
From the relation $Qf-1 \perp_p Qf$, the lower Pythagorean inequality yields
\begin{align}
     1 &=  -(Qf-1) + Qf  \nonumber \\
     1  &\geq \|Qf-1\|_p^p + K_1\|Qf\|_p^p. \label{fromlower22}
\end{align}

Next, from \cite[Proposition~5.1]{Cent}, we know that $|w|^2 \geq 1 - \|Qf-1\|_p^p$.  Combining this with \eqref{fromlower22} results in the bound
\begin{align}
     |w|^2 &\geq 1 - \|Qf-1\|_p^p  \nonumber  \\
     &\geq  1 - (1 - K_1\|Qf\|_p^p)^{2/p}  \nonumber.  
\end{align}
By the hypothesis that $w$ can be made small, this forces $\|Qf\|_p$ to be correspondingly small, so now the binomial approximation allows for
\begin{align*}
   |w|^2  &\geq  \frac{K_1}{p}\|Qf\|_p^p.  \label{ubqf22}
\end{align*}

We extend this further by utilizing Lemma \ref{fromdiffquotient2}, to the effect that
\[
     |w|^2  \geq (K_1/p) \|Qf\|_p^p \geq   (K_1/p)(1 - |w|)^p |a|^p\|f\|_p^p.
\] 
This shows that $a$ is forced to be small as well.

Finally, start with the upper Pythagorean inequality again, then use the Lower Pythagorean inequality, the binomial approximation, and \eqref{fromlower22} to get
\begin{align}
           \|Qf -1\|_p^2 +  K_2 \|awf\|_p^2 &\geq  \|azf+1\|_p^2  \nonumber \\
           |aw|^2  &\geq \frac{\|azf+1\|_p^2  -\|Qf-1\|_p^2}{K_2 \|f\|_p^2}   \nonumber  \\
                &\geq \frac{(1+K_1\|azf\|_p^p)^{2/p}  -1}{K_2 \|f\|_p^2}  \label{stuffing2}\\
                |aw|^2  &\geq \frac{K_1}{pK_2\|f\|_p^2}|a|^p, \label{cranberry2}
\end{align}
for $w$ sufficiently close to zero.  Notice that to reach \eqref{stuffing2}, we utilized the orthogonality relation
\[
        1 \perp_p  azf.
\]
Dividing through \eqref{cranberry2} by $|a|^2$ yields the claim.  
\end{proof}

\bigskip

\begin{Lemma}\label{ginandtonic2}
Suppose $1<p<\infty$, $p\neq 2$, $f \in H^p$, $f(0) \neq 0$, $\|f\|_p = 1$, $\lambda = q_{0,p}[f]$ and $Q(z) = q_{1,p}[f](z) = a(z-w)$.  If $|w|$ can be made arbitrarily small, then
\begin{align*}
       (4K_2/p)|a|^p (1+|w|)^p &\geq K_2 |\lambda|^2,\ \, 1<p<2\\
       pK_2|a|^2(1+|w|)^2 &\geq K_1 |\lambda|^p,\ \,2<p<\infty.
\end{align*}
\end{Lemma}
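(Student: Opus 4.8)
The plan is to tie the constant approximant $\lambda = q_{0,p}[f]$ to the linear data $(a,w)$ through two ingredients: the linear OPA $Q$ approximates $1/f$ at least as well as the constant OPA, because $\Pol_0 \subseteq \Pol_1$; and the two Pythagorean inequalities of Theorem~\ref{pythagthm}, applied to the extremal orthogonality relations for $\lambda$ and for $Q$. I will keep the lower-Pythagorean parameters $r,K_1$ and upper-Pythagorean parameters $s,K_2$ generic, and substitute their concrete values ($r=p,\ s=2$ when $p>2$; $r=2,\ s=p$ when $1<p<2$) only at the end, so that both ranges of $p$ are treated at once.

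First I would exploit the extremality of $\lambda$. Since $\lambda$ solves $\min_c \|cf-1\|_p$, we have $\lambda f - 1 \perp_p f$, hence $1 - \lambda f \perp_p \lambda f$. Writing $1 = (1-\lambda f) + \lambda f$ and applying the lower Pythagorean inequality, together with $\|f\|_p = 1$, gives
\[
    1 \geq \|\lambda f - 1\|_p^r + K_1 |\lambda|^r,
\]
so that $K_1 |\lambda|^r \leq 1 - \|\lambda f - 1\|_p^r$. Because $Q \in \Pol_1$ while $\lambda \in \Pol_0 \subseteq \Pol_1$, optimality of $Q$ yields $\|Qf-1\|_p \leq \|\lambda f - 1\|_p$, whence
\[
    K_1 |\lambda|^r \leq 1 - \|Qf-1\|_p^r.
\]

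Next I would bound the right side using the extremality of $Q$. From $Qf - 1 \perp_p Qf$ I write $1 = (1-Qf)+Qf$ and apply the upper Pythagorean inequality to get $1 \leq \|Qf-1\|_p^s + K_2 \|Qf\|_p^s$, hence $\|Qf-1\|_p^r \geq (1 - K_2\|Qf\|_p^s)^{r/s}$. The hypothesis that $|w|$ can be made arbitrarily small forces, through Lemma~\ref{fromdiffquotient2} and the bounds established in the proof of Lemma~\ref{pumpkinpie2}, both $|a|$ and hence $\|Qf\|_p$ to be small, so the binomial approximation $(1-|x|)^{\alpha} \geq 1 - 2\alpha|x|$ with $\alpha = r/s$ applies and yields $1 - \|Qf-1\|_p^r \leq (2r/s)K_2\|Qf\|_p^s$. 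Finally, since multiplication by $z$ is isometric on $H^p$, the triangle inequality gives $\|Qf\|_p = |a|\,\|(z-w)f\|_p \leq |a|(\|zf\|_p + |w|\,\|f\|_p) = |a|(1+|w|)$, so that $\|Qf\|_p^s \leq |a|^s(1+|w|)^s$. Chaining the three estimates produces
\[
    K_1 |\lambda|^r \leq \frac{2r}{s}\,K_2\,|a|^s(1+|w|)^s,
\]
and substituting $(r,s)=(p,2)$ for $p>2$ and $(r,s)=(2,p)$ for $1<p<2$ recovers the two stated inequalities (in each case the factor attached to $|\lambda|$ is the lower-Pythagorean constant $K_1$).

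The step I expect to be the main obstacle is the justification of the binomial approximation: it is legitimate only once $\|Qf\|_p$ is known to be genuinely small, which does not follow from the statement of this lemma in isolation and must be imported from the preceding lemmas, where $w \to 0$ is shown to force $a \to 0$. A secondary bookkeeping point is to apply the lower inequality to the $\lambda$-relation (producing an upper bound on $|\lambda|^r$) but the upper inequality to the $Q$-relation (producing an upper bound on the deficit $1 - \|Qf-1\|_p^r$), and to keep straight which of $r,s,K_1,K_2$ is in force on each interval of $p$.
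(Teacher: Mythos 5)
Your proposal is correct and is essentially the paper's own argument: the lower Pythagorean inequality applied to $\lambda f - 1 \perp_p \lambda f$, the comparison $\|Qf-1\|_p \leq \|\lambda f - 1\|_p$ coming from $\Pol_0 \subseteq \Pol_1$, the upper Pythagorean inequality applied to $Qf - 1 \perp_p Qf$, the binomial approximation (legitimate, as you note, because the earlier lemmas force $|a|$ and hence $\|Qf\|_p$ to be small when $|w|$ is), and finally $\|Qf\|_p \leq |a|(1+|w|)$. The one discrepancy is the paper's rather than yours: for $1<p<2$ this chain (including the paper's, which treats only $p>2$ in detail and calls the other case ``analogous'') yields $K_1|\lambda|^2$ on the right-hand side, whereas the stated lemma has $K_2|\lambda|^2$; since $K_1 = p-1 < K_2 = 1/(2^{p-1}-1)$ in that range, the statement's constant appears to be a typo.
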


\begin{proof}
Once again, let us treat the case $2<p<\infty$ in detail.  The upper Pythagorean inequality gives
\begin{align*}
     Qf-1 &=  azf - awf -1\\
     awf +1 &= -(Qf-1) \oplus_p (azf) \\
     \|awf + 1\|_p^2  &\leq \|Qf-1\|_p^2 + K_2 \|azf\|_p^2.
\end{align*}
Note
\begin{align*}
       K_2 |a|^2  &\geq  \|awf + 1\|_p^2 -  \|Qf-1\|_p^2  \\
            &\geq  \|\lambda f - 1\|_p^2 -  \|Qf-1\|_p^2 \\
            &\geq 0.
\end{align*}
If we assume, for the sake of argument, that $|w|$ can be made arbitrarily small, then we have previously established that both $|a|$ and $\|Qf\|_p$ are forced to be small.  In this case $\|Qf-1\|_p$ tends toward unity, and $\|\lambda f-1\|_p$ must do likewise.  The conclusion is that $\lambda$ goes to zero.  

Next,
\begin{align*}
     \lambda f - 1 &\perp_p  \lambda f \\
     1 &= -(\lambda f - 1) \oplus_p \lambda f\\
     1 &\geq \|\lambda f - 1\|_p^p + K_1 |\lambda|^p \|f\|_p^p \\
     1 -\|\lambda f - 1\|_p^p  &\geq  K_1 |\lambda|^p \|f\|_p^p \\
     1 -\|Q f - 1\|_p^p  &\geq  K_1 |\lambda|^p \|f\|_p^p \\
     1 - (1 - K_2\|Qf\|_p^2)^{p/2}  &\geq  K_1 |\lambda|^p \|f\|_p^p \\
     (pK_2)\|Qf\|_p^2  &\geq  K_1 |\lambda|^p \|f\|_p^p \\
      (pK_2)|a|^2(1+|w|)^2\|f\|_p^2  &\geq  K_1 |\lambda|^p \|f\|_p^p \\
     (pK_2)|a|^2(1+|w|)^2 &\geq K_1 |\lambda|^p
\end{align*}
when $|w|$ is small.  

The case $1<p<2$ is handled analogously.
\end{proof}

\bigskip

\begin{Lemma}\label{whitewine2} 
     Suppose $1<p<\infty$, $p\neq 2$, $f \in H^p$, $f(0) \neq 0$, $\|f\|_p = 1$, $\lambda = q_{0,p}[f]$ and $Q(z) = q_{1,p}[f](z) = a(z-w)$.  If $|w|$ can be made arbitrarily small by choice of $f$, then 
     \begin{align*}
           |f(0)| &\leq C|\lambda|^{p-1},\ \,1<p<2\\
           |f(0)|  &\leq  C|\lambda|,\ \,2<p<\infty 
     \end{align*}
     for some positive constant $C$.
\end{Lemma}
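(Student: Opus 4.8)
The plan is to exploit the variational (orthogonality) equation satisfied by the degree-zero OPA $\lambda = q_{0,p}[f]$, and to read off $f(0)$ from it by perturbing around the limiting situation $\lambda \to 0$. Since $\lambda f$ is the metric projection of $1$ onto $\operatorname{span}\{f\}$, the residual satisfies $\lambda f - 1 \perp_p f$, so by James's criterion (Theorem \ref{praccritperp}),
\[
\int |\lambda f - 1|^{p-2}\,\overline{(\lambda f - 1)}\, f\, dm = 0.
\]
Writing $u := \lambda f - 1$ and $\Phi(z) := |z|^{p-2}\bar z$ for the (unnormalized) duality map, the observation driving the proof is that $\Phi(-1) = -1$, so that $u$ is a perturbation of $-1$ of size $u - (-1) = \lambda f$, while $\int \Phi(-1)\,f\,dm = -\int f\,dm = -f(0)$ by the mean value property.

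First I would split $\Phi(u) = -1 + R$, where $R := \Phi(u) - \Phi(-1)$, and substitute into the orthogonality relation to obtain
\[
0 = \int \Phi(u)\,f\,dm = -f(0) + \int R\,f\,dm, \qquad\text{hence}\qquad f(0) = \int R\, f\, dm .
\]
By H\"older's inequality and the normalization $\|f\|_p = 1$, this yields the clean estimate $|f(0)| \le \|R\|_{p'}$, reducing everything to bounding $\|R\|_{p'} = \|\Phi(u) - \Phi(-1)\|_{p'}$ in terms of $|\lambda|$, with the controlling increment $u - (-1) = \lambda f$.

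Next I would invoke the standard elementary inequalities for the map $\Phi$. For $1 < p < 2$ one has $|\Phi(a) - \Phi(b)| \le C_p |a - b|^{p-1}$, giving $|R| \le C_p|\lambda|^{p-1}|f|^{p-1}$; since $(p-1)p' = p$ and $\|f\|_p = 1$, we get $\||f|^{p-1}\|_{p'} = 1$ and therefore $\|R\|_{p'} \le C_p|\lambda|^{p-1}$, which is the claimed bound. For $p \ge 2$ one instead has $|\Phi(a) - \Phi(b)| \le C_p(|a|^{p-2} + |b|^{p-2})|a - b|$; bounding $|u|^{p-2} \le C_p(|\lambda|^{p-2}|f|^{p-2} + 1)$ leads to $|R| \le C_p(|\lambda|^{p-1}|f|^{p-1} + |\lambda||f|)$, and the same exponent arithmetic together with $\|f\|_{p'} \le \|f\|_p = 1$ (we integrate against a probability measure) gives $\|R\|_{p'} \le C_p(|\lambda|^{p-1} + |\lambda|)$. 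Finally, using that $w \to 0$ forces $\lambda \to 0$ (established in the preceding lemmas), so $|\lambda| \le 1$ and hence $|\lambda|^{p-1} \le |\lambda|$, I would absorb constants to conclude $|f(0)| \le C|\lambda|$.

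I expect the main obstacle to be the case $p \ge 2$, specifically controlling the factor $|u|^{p-2} + 1$ from the Lipschitz-type estimate for $\Phi$ inside $L^{p'}$: one must track the integrability bookkeeping so that the pointwise powers of $|f|$ reassemble into $\|f\|_p = 1$ via the identity $(p-1)p' = p$, and one must know $|\lambda| \le 1$ in order to reduce $|\lambda|^{p-1}$ to $|\lambda|$. The smallness of $|w|$ enters only through this last point (it guarantees $\lambda \to 0$); the orthogonality relation, and hence the estimate, is otherwise insensitive to $w$.
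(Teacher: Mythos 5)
Your proposal is correct, but it takes a genuinely different route from the paper. The paper never touches the Euler--Lagrange identity: it runs the comparison $1 - K_2\|\lambda f\|_p^2 \leq \|\lambda f - 1\|_p^2 \leq \|cf-1\|_p^2$, expands the competitor via $cf(0)-1 \perp_p c(f-f(0))$ and the upper Pythagorean inequality, and then optimizes over the constant $c$ (taking $|c| = |f(0)|/(2K_2\|f-f(0)\|_p^2)$) to force $|f(0)|^2 \lesssim \|\lambda f\|_p^2$; everything stays inside the Pythagorean-inequality toolbox of Section \ref{prelim}. You instead exploit the exact orthogonality relation $\int |\lambda f-1|^{p-2}\overline{(\lambda f - 1)}\,f\,dm = 0$ from Theorem \ref{praccritperp}, linearize the duality map $\Phi(z)=|z|^{p-2}\bar z$ at $-1$ (using $\Phi(-1)=-1$ and the mean value property $\int f\,dm = f(0)$), and control the remainder by the standard H\"older/Lipschitz estimates $|\Phi(a)-\Phi(b)|\leq C_p|a-b|^{p-1}$ ($1<p\leq 2$) and $|\Phi(a)-\Phi(b)|\leq C_p(|a|^{p-2}+|b|^{p-2})|a-b|$ ($p\geq 2$), with the exponent identity $(p-1)p'=p$ doing the bookkeeping. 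Your argument buys something real: for $1<p<2$ the bound $|f(0)|\leq C|\lambda|^{p-1}$ comes out unconditionally, with no smallness hypothesis on $w$ at all, and for $p>2$ the only residual hypothesis is $|\lambda|\leq 1$ --- which you correctly source from the preceding lemmas, though it can even be removed by the trivial case split: if $|\lambda|>1$ then $|f(0)|\leq \|f\|_p = 1 < |\lambda|$ already gives the claim with $C=1$. The cost is that the duality-map continuity inequalities, while standard (they are the workhorse estimates in $p$-Laplacian theory), are not stated anywhere in the paper, so a complete writeup should prove them or cite a reference; the paper's proof, by contrast, is self-contained given Theorem \ref{pythagthm}.
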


\begin{proof}  Yet again let us take $2<p<\infty$.
Assume $c$ is a constant near zero for which $cf(0) > 0$.  Then use
\[
     1 - K_2 \|\lambda f\|_p^2  \leq  \|\lambda f - 1\|_p^2 \leq  \|cf-1\|_p^2 
\]
and
\begin{align}
   \|\lambda f - 1\|_p \leq  \|cf-1\|_p  &=  \|cf(0)-1 + c(f - f(0))\|_p  \nonumber \\
      \|cf-1\|_p^2  &\leq  |cf(0)-1|^2  + K_2 |c|^2 \|f - f(0)\|_p^2  \nonumber \\
      \|cf-1\|_p^2  &\leq  (1 - |cf(0)|)^2  + K_2 |c|^2 \|f - f(0)\|_p^2  \nonumber \\
      \|cf-1\|_p^2  &\leq  (1 - |cf(0)|)  + K_2 |c|^2 \|f - f(0)\|_p^2. \label{buffalowings2}
\end{align}

By basic calculus, the last expression is minimized by taking
\[
     |c| =  \frac{|f(0)|}{2K_2 \|f - f(0)\|_p^2}.
\]

The minimum value attained in the right side of \eqref{buffalowings2} is therefore
\[
   1 - \frac{|f(0)|^{2}}{4K_2\|f-f(0)\|_p^2} < 1.
\]

Consequently we have 
\begin{align*} 1 - 2K_2 \|\lambda f\|_p^2 &\leq 
     1 - K_2 \|\lambda f\|_p^2\\
        &\leq \|cf-1\|_p^2 \\
       &\leq  1 - \frac{|f(0)|^{2}}{4K_2\|f-f(0)\|_p^2},
\end{align*}
which shows that
\[
      |f(0)| \leq  C|\lambda|.
\]

It is substantially similar to prove the claim for $1<p<2$.

\end{proof}

\bigskip

\begin{Lemma}\label{redbellpepper2}
     Suppose $1<p<\infty$, $p\neq 2$, $f \in H^p$, $f(0) \neq 0$, $\|f\|_p = 1$, $\lambda = q_{0,p}[f]$ and $Q(z) = q_{1,p}[f](z) = a(z-w)$.  If $|w|$ can be made arbitrarily small, then
     \begin{align*}
         C_1 |a| &\leq |wf(0)| \leq C_2|a|^{p-1},\ \,1<p<2\\
         C_1|a|^{p-1} &\leq |wf(0)| \leq C_2|a|,\ \, 2<p<\infty 
     \end{align*}
     for some positive constants $C_1$ and $C_2$.
\end{Lemma}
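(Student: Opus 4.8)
The plan is to deduce everything from the degree-zero estimates applied to the auxiliary function $g := (z-w)f$, using the two facts recorded earlier: $a = q_{0,p}[g]$ (as in the proof of Proposition \ref{aboundfora}) and $g(0) = -wf(0)$. The first step is to record that as $|w| \to 0$ the relevant norms converge to $1$: since $(z-w)f - zf = -wf$ and $(z-w)f + wf(0) - zf = -w(f-f(0))$, while $\|zf\|_p = \|f\|_p = 1$ and $\|f - f(0)\|_p \le 2$, one has $\big|\,\|g\|_p - 1\,\big| \le |w|$ and $\big|\,\|g - g(0)\|_p - 1\,\big| \le 2|w|$. Hence for $|w|$ small all the bracketed quantities below sit near $1$.

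For the two lower bounds I would simply invoke Proposition \ref{aboundfora}, which states $|a|^{r-1} \le r|wf(0)| / \big(K_1\|g - g(0)\|_p^r + K_1\|g\|_p^r\big)$, where $r$ is the lower Pythagorean exponent ($r = 2$ for $1<p<2$ and $r = p$ for $2<p<\infty$). Since the denominator tends to $2K_1$, rearranging gives $|wf(0)| \ge C_1|a|^{r-1}$, which is $C_1|a| \le |wf(0)|$ when $1<p<2$ and $C_1|a|^{p-1} \le |wf(0)|$ when $2<p<\infty$, as claimed.

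For the two upper bounds I would rerun the proof of Lemma \ref{whitewine2} with the unit-norm function $\tilde g := g/\|g\|_p$ in the role of $f$ and $\tilde a := q_{0,p}[\tilde g] = \|g\|_p\,a$ in the role of $\lambda$. The key point is that that proof uses only the optimality of the degree-zero approximant together with the smallness of the constant term (to license the binomial approximations); it makes no use of any degree-one data. Here $\tilde g(0) = g(0)/\|g\|_p = -wf(0)/\|g\|_p \to 0$ as $|w| \to 0$ because $|f(0)| \le \|f\|_p = 1$, so the required smallness holds. The argument then yields $|\tilde g(0)| \le C|\tilde a|^{p-1}$ when $1<p<2$ and $|\tilde g(0)| \le C|\tilde a|$ when $2<p<\infty$; undoing the normalization and using $g(0) = -wf(0)$ together with $\|g\|_p \to 1$ converts these into $|wf(0)| \le C_2|a|^{p-1}$ and $|wf(0)| \le C_2|a|$, respectively.

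The step I expect to require the most care is not any single estimate but the uniformity of the constants $C_1, C_2$ along the sequence $f = f_k$ with $w_k \to 0$: the factors $\|g\|_p$ and $\|g - g(0)\|_p$ must stay bounded away from $0$ and $\infty$, and the binomial approximations must hold with a common threshold. Both are guaranteed by the two norm estimates above together with $|f_k(0)| \le 1$, which force $\|g\|_p, \|g - g(0)\|_p \in [1 - 2|w_k|, 1 + 2|w_k|]$ and $|\tilde g(0)| = O(|w_k|)$ simultaneously. Thus all smallness and boundedness requirements are met as soon as $|w_k|$ falls below a threshold, and the displayed inequalities follow.
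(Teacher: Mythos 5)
Your proposal is correct, and it is a hybrid: the lower bounds follow the paper verbatim, while the upper bounds take a genuinely different route. For $C_1|a|\leq |wf(0)|$ (resp.\ $C_1|a|^{p-1}\leq |wf(0)|$) the paper does exactly what you do: apply Proposition \ref{aboundfora} and note that the denominator $K_1\|(z-w)f+wf(0)\|_p^r + K_1\|(z-w)f\|_p^r$ stays near $2K_1$ as $|w|\to 0$. For the upper bounds, however, the paper stays at degree one: by optimality, $\|Qf-1\|_p\leq \|u(z-v)f-1\|_p$ for the competitor $u(z-v)$ with $u=|a|$ and $v=-|w|\overline{f(0)}/|f(0)|$ (same modulus as $w$, phase rotated so that $uvf(0)<0$); splitting off the constant term of $u(z-v)f-1$ via the upper Pythagorean inequality and combining with $\|Qf-1\|_p^2\geq 1-K_2\|Qf\|_p^2$ gives $|awf(0)|\leq K_2\|Qf\|_p^2 + K_2|u|^2(1+2|v|)^2\|f\|_p^2$, hence $|wf(0)|\leq C_2|a|$ after inserting $\|Qf\|_p\leq |a|(1+|w|)$. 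You instead push everything down to degree zero via $g=(z-w)f$, $a=q_{0,p}[g]$, $g(0)=-wf(0)$, and rerun the proof of Lemma \ref{whitewine2} on $\tilde g=g/\|g\|_p$; in terms of $f$, your competitors are the polynomials $c(z-w)$ sharing the root $w$, rather than the paper's rotated-root competitors. Both arguments are legitimate. Two points in your write-up deserve emphasis as the ones carrying the load: first, you correctly refrain from citing Lemma \ref{whitewine2} as a black box (its hypotheses refer to the root of $q_{1,p}[\tilde g]$, about which nothing is known) and instead justify that its proof uses only degree-zero optimality plus smallness of the constant term; that reading of the proof is accurate, and the needed smallness is cleanly supplied by $|\tilde g(0)|=O(|w|)$ and $\|\tilde g-\tilde g(0)\|_p\to 1$, which also makes the uniformity of the constants along the sequence transparent. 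Second, your bookkeeping $\tilde a=\|g\|_p a$ and the final unnormalization (picking up a harmless factor $\|g\|_p^2$, resp.\ $\|g\|_p^p$) is right. What your route buys is unification---both halves of the lemma become degree-zero statements about the single function $g$---at the cost of an argument-by-inspection of another proof; what the paper's route buys is a self-contained computation whose competitor choice produces $|awf(0)|$ in one stroke with no renormalization.
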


\begin{proof} Again, we treat the case $2<p<\infty$ in detail. The first inequality 
comes from Proposition \ref{aboundfora}, namely,
\[
   |a|^{p-1} \leq \frac{p|wf(0)|}{K_1 \|(z-w)f(z)+wf(0)\|_p^p + K_1\|(z-w)f(z)\|_p^p},
\]
noting that the denominator is well behaved as $|w|$ decreases to zero.

For the second inequality, start with
\begin{align*}
     u(z-v)f -1   &=  uzf -uvf -1 \\
          &= -(uvf(0) +1)  \oplus_p  (uzf - uv(f - f(0)) \\
     \|u(z-v)f -1 \|_p^2 &\leq |uvf(0) +1|^2 + K_2 \|uzf - uv(f - f(0))\|_p^2 \\
      \|Qf -1 \|_p^2 &\leq |uvf(0) +1|^2 + K_2 |u|^2 (1 + 2|v|)^2 \|f\|_p^2.
\end{align*}

We can choose $u$ and $v$ small in magnitude, with $uvf(0) < 0$.  Then the estimates continue
\begin{align}
           \|Qf -1 \|_p^2 &\leq (1 -|uvf(0)|)^2 + K_2 |u|^2 (1 + 2|v|)^2 \|f\|_p^2 \nonumber  \\
                &\leq  (1 -|uvf(0)|) + K_2 |u|^2 (1 + 2|v|)^2 \|f\|_p^2 \nonumber   \\
           1 - K_2\|Qf\|_p^2 &\leq  (1 -|uvf(0)|) + K_2 |u|^2 (1 + 2|v|)^2 \|f\|_p^2  \nonumber  \\
                |uvf(0)| &\leq K_2\|Qf\|_p^2+ K_2 |u|^2 (1 + 2|v|)^2 \|f\|_p^2.  \label{lemon2}
\end{align}
Now use $u = |a|$, $v = -|w|\overline{f(0)}/|f(0)|$ and $\|Qf\|_p \leq |a|(1+|w|)\|f\|_p$.
\end{proof}

\bigskip

Lemmas
\ref{pumpkinpie2}, \ref{ginandtonic2}, \ref{whitewine2}, and \ref{redbellpepper2} above
 show that the quantities $a$, $\lambda$, and $f(0)$ must tend also toward zero at certain rates. We emphasize that these claims are contingent on the working assumption that the OPA root $w$ can be made arbitrarily small by the choice of $f$, a contention we ultimately intend to disprove in Theorems \ref{rootsbddinnerf} and \ref{rootsbddevenint}. Thus, these lemmas have no scope beyond the intended proofs by contradiction.


\begin{Theorem}\label{rootsbddinnerf}
    Suppose that $1<p<\infty$.
    Let $f \in H^p$ be an inner function such that $f(0) \neq 0$.  There exists a constant $\rho>0$, depending only on $p$, such that if $q_{1,p}[f](z) = a(z-w)$, then $|w| \geq \rho$.
\end{Theorem}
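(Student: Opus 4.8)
The plan is to argue by contradiction, following the roadmap laid out before the lemmas. Suppose the conclusion fails. Then there is a sequence of inner functions $f_k$ with $f_k(0)\neq 0$ whose linear OPAs $q_{1,p}[f_k](z)=a_k(z-w_k)$ satisfy $w_k\to 0$. Since $|f_k|=1$ a.e.\ on $\partial\D$, we automatically have $\|f_k\|_p=1$, so the normalization hypothesis of Theorem~\ref{formulasforw} is in force, and I suppress the index $k$ as in the roadmap. The case $p=2$ is the classical result cited earlier, so I focus on $p\neq 2$, treating $2<p<\infty$ in detail and the range $1<p<2$ analogously. Lemmas~\ref{pumpkinpie2}, \ref{ginandtonic2}, \ref{whitewine2}, and \ref{redbellpepper2} apply and force $a$, $\lambda=q_{0,p}[f]$, and $f(0)$ to tend to $0$ at the stated rates; in particular Lemma~\ref{pumpkinpie2} yields $a\to 0$.

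The decisive extra ingredient is an inner-function identity fed into the exact formulas of Theorem~\ref{formulasforw}. On $\partial\D$ one has $\bar f=1/f$ and $|f|^2=1$, whence $|Qf-1|=|\bar f-Q|$ and $Qf\bar f=Q$. Using these I would simplify the defining integrals: $C=\int|Qf-1|^{p-2}\,dm$ and $D=\int|Qf-1|^{p-2}\bar z\,dm$, with $B=\int|Qf-1|^{p-2}\bar z\bar f\,dm$. Writing $Qf-1=-(1-Qf)$ with $Qf$ small and expanding $|Qf-1|^{p-2}=1-(p-2)\,\mathrm{Re}(Qf)+O(|Qf|^2)$ via the binomial approximation, I would compute the leading behavior of each integral as $w\to 0$. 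Here one must note that $\widehat{Qf}(1)=af(0)-aw\,\widehat f(1)$, so that in the regime $w\to 0$ the term $aw\,\widehat f(1)$ dominates $af(0)$; this gives $C\to 1$, $B\sim-\tfrac{p-2}{2}\,a$, and $D\sim\tfrac{p-2}{2}\,aw\,\widehat f(1)$. Substituting into \eqref{dubform3}, namely $w=(aC-B)/(aD)$, the numerator behaves like $\tfrac{p}{2}a$ and the denominator like $\tfrac{p-2}{2}a^2w\,\widehat f(1)$, so $w^2\sim \tfrac{p}{(p-2)\,a\,\widehat f(1)}$. Since $|\widehat f(1)|\le\|f\|_{H^2}=1$ for inner $f$, this produces a lower bound of the form
\[
     |w|^2\,|a|\ \geq\ c(p)\ >\ 0,
\]
with $c(p)$ depending only on $p$ (and an analogous relation for $1<p<2$).

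Granting this, the contradiction is immediate: Lemma~\ref{pumpkinpie2} gives $a\to 0$, so $|w|^2\ge c(p)/|a|\to\infty$, which is incompatible with $w\to 0$. Hence the roots cannot be pushed to the origin, and there is a constant $\rho>0$, depending only on $p$, with $|w|\ge\rho$.

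The main obstacle is the rigorous justification of the displayed lower bound. The difficulty is that, in the regime $w\to 0$, the correction term $-(p-2)\mathrm{Re}(Qf)$ to $|Qf-1|^{p-2}$ is of the \emph{same} order as the quantities $B$ and $D$ it is meant to perturb, so crude magnitude bounds are inadequate and one must track the signs and leading constants of these integrals precisely. This is exactly where the a priori rates from Lemmas~\ref{ginandtonic2}, \ref{whitewine2}, and \ref{redbellpepper2}, together with the uniform bound $\bar f\in L^\infty$ afforded by innerness (which keeps the relevant H\"older estimates closed for every $p$), are consumed: they guarantee that the neglected $O(|Qf|^2)$ errors are genuinely lower order and that the expansions above are valid uniformly along the sequence. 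One must also verify that the formulas of Theorem~\ref{formulasforw} apply, which for $1<p<2$ requires $\|Qf\|_\infty<1$, a condition secured by the smallness of $Qf$ in the contradiction regime.
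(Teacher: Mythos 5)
Your proposal follows the paper's own route for this theorem---argue by contradiction, invoke Lemmas \ref{pumpkinpie2}, \ref{ginandtonic2}, \ref{whitewine2}, \ref{redbellpepper2} to force $a$, $\lambda$, $f(0)$ to zero, use innerness both to normalize $\|f\|_p=1$ and to control the integrals, expand $|Qf-1|^{p-2}$ by the binomial series, and feed everything into the formula $w=(aC-B)/(aD)$ of \eqref{dubform3}---and your computations of $B$ and $C$ agree with the paper's. The genuine gap is the step you flag yourself: the claim that in the regime $w\to 0$ the term $aw\hat{f}_1$ dominates $af(0)$, so that $D\sim \tfrac{p-2}{2}\,aw\hat{f}_1$ (here $r=(p-2)/2$, so $D=-r\,af(0)+r\,aw\hat{f}_1+O(|a|^2)$). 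None of the lemmas compares $|f(0)|$ with $|w\hat{f}_1|$: Lemmas \ref{ginandtonic2} and \ref{whitewine2} give only \emph{upper} bounds for $|f(0)|$ in terms of $|a|$, and Lemma \ref{redbellpepper2} actually gives the \emph{lower} bound $|f(0)|\geq C_1|a|^{p-1}/|w|$. Worse, the claim is simply false for admissible $f$: take $f(z)=(c+z^2)/(1+\bar{c}z^2)$ with $0<|c|<1$, an inner function with $f(0)=c\neq 0$ but $\hat{f}_1=0$ by evenness. Then $D=-r\,af(0)+O(|a|^2)$, and your asymptotic $w^2\sim p/\bigl((p-2)a\hat{f}_1\bigr)$---hence the displayed bound $|w|^2|a|\geq c(p)$---is meaningless; the same degeneracy occurs whenever $|\hat{f}_1|$ is merely small compared with $|f(0)|/|w|$.

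The repair is simpler than what you attempt, and it is exactly how the paper closes the proof: one never needs to know which term of $D$ dominates. Since $|f(0)|$, $|w|$, and $|\hat{f}_1|$ are all at most $1$, every term of $aD=-r a^2 f(0)+r a^2 w\hat{f}_1+O(|a|^3)$ is $O(|a|^2)$, while the numerator satisfies $|aC-B|=\tfrac{p}{2}\,|a|\,(1+o(1))$, hence is bounded below by a constant multiple of $|a|$. (Your leading constant $p/2$ for the numerator is in fact the correct one.) Therefore $|w|=|aC-B|/|aD|\geq c/|a|\to\infty$ as $a\to 0$ (Lemma \ref{pumpkinpie2}), flatly contradicting $w\to 0$, with no case analysis on $\hat{f}_1$ at all. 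Equivalently: whichever term dominates $aD$, you would get $|w|^2|a||\hat{f}_1|\approx c(p)$ or $|w||a||f(0)|\approx c(p)$, and \emph{both} are contradictions, since every factor on the left tends to zero. With this correction your write-up is sound, and the remaining points you add---the innerness identities on $\partial\D$, handling $p=2$ by citing the known $H^2$ result, and verifying $\|Qf\|_\infty\leq |a|(1+|w|)<1$ so that Theorem \ref{formulasforw} applies when $1<p<2$---are all legitimate (the last being a detail the paper leaves implicit).
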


\begin{proof}
    The proof will be by contradiction.  Accordingly, we assume that by choice of $f$ satisfying the hypotheses, $w$ can be made arbitrarily close to zero.   Let us write $Q(z) = q_{1,p}[f](z) = a(z-w)$.
    
In the notation of Theorem \ref{formulasforw}, we have the formula
\[
      w = \frac{aC-B}{aD}.
\]

To evaluate this formula for $w$, let us use the binomial series expansion
\[
      (1 - Qf)^{r} = \sum_{k=0}^{\infty} \binom{r}{k} (-Qf)^k,
\]
where $r = (p-2)/2$.  Note that in the resulting integral expressions for $B$, $C$ and $D$ the terms are uniformly bounded, and the series are absolutely convergent if $|a|$ and $|w|$ are sufficiently small. Hence we may interchange the order of integration and summation.

The condition $|z^m f^n| = 1$ a.e.\ holds for all nonnegative $m$ and $n$, and then the estimate
\begin{align*}
       \Big|  \int (Qf)^j(\overline{Qf})^k z^m f^n\,dm \Big| &\leq  \int |(Qf)^j(\overline{Qf})^k z^m f^n|\,dm \\
          &\leq |a|^{j+k}(1 + |w|)^{j+k}
\end{align*}
applies to each of the formulas for $B$, $C$ and $D$.

It follows that
\begin{align}
     |Qf-1|^{p-2} &= 1 - razf +rawf -r\overline{azf} +r\overline{awf} + O(|a|^2) \label{theexpr2}
\end{align}

When integrating against the expression \eqref{theexpr2} to find $B$, $C$, and $D$, respectively, further terms will vanish, leaving
\begin{align*}
    B &=  -ra  + O(|a|^2) \\
    C &=   1 + rawf(0)+ r\overline{awf(0)} + O(|a|^2) \\
    D &=  -r{af(0)} + r{aw\hat{f}_1}  + O(|a|^2), 
\end{align*}
where $\hat{f}_1$ is the 1st Taylor coefficient of $f$. This gives 
\begin{align}
     w &= \frac{aC-B}{aD}  \nonumber\\
      &= \frac{ ra +ra^2wf(0)+ r|a|^2\overline{wf(0)} +a|a|^2+a|aw|^2 + ra + O(|a|^2)}{ -ra^2{f(0)} + ra^2{w\hat{f}_1}+ O(|a|^3)}  \nonumber\\
      &= \frac{ 2r +rawf(0)+ 2r\overline{awf(0)} + O(|a|)}{ -{{ra}f(0)} + {{ra}w\hat{f}_1}+ O(|a|^2)}.\label{finalexpr2}
\end{align}

We previously showed that as $w$ tends toward zero, so do $a$ and $f(0)$, while $\hat{f}_1$ remains bounded.  But the final expression \eqref{finalexpr2} is bounded from zero for $|w|$ sufficiently small, which is a contradiction.
\end{proof}
    
We now turn to the second theorem of this section. 

\begin{Theorem}\label{rootsbddevenint}
      Suppose that $p>2$ is an even integer.  Let $f \in H^p$ satisfy $f(0) \neq 0$.  There exists  $\rho>0$, depending only on $p$, such that if    $q_{1,p}[f](z) = a(z-w)$, then $|w| \geq \rho$.
\end{Theorem}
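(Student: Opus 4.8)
The plan is to mirror the proof of Theorem~\ref{rootsbddinnerf}, arguing by contradiction. Suppose there is a sequence $f_k \in H^p$ with $\|f_k\|_p = 1$, $f_k(0)\neq 0$, whose linear OPAs $q_{1,p}[f_k] = a_k(z-w_k)$ satisfy $w_k \to 0$. By Lemmas~\ref{pumpkinpie2}--\ref{redbellpepper2} the associated quantities $a_k$, $\lambda_k = q_{0,p}[f_k]$ and $f_k(0)$ are then forced to zero at definite rates, e.g. $|w|^2 \gtrsim |a|^{p-2}$, $|\lambda|^p \lesssim |a|^2$, $|f(0)| \lesssim |\lambda|$, and the two-sided bound $C_1|a|^{p-1} \le |wf(0)| \le C_2|a|$. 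Suppressing the index $k$, I would evaluate the root through the formula $w = (aC-B)/(aD)$ of Theorem~\ref{formulasforw}, which is valid for all $2<p<\infty$.

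The even-integer hypothesis enters in expanding $|Qf-1|^{p-2}$. Writing $p=2m$ and $r=(p-2)/2 = m-1 \in \mathbb{Z}_{\ge 0}$, one has the \emph{exact finite} identity
\[
|Qf-1|^{p-2} = (1-Qf)^{r}(1-\overline{Qf})^{r} = \sum_{j,k=0}^{r}\binom{r}{j}\binom{r}{k}(-Qf)^{j}(-\overline{Qf})^{k},
\]
so no binomial \emph{series} and no interchange of limits is needed, and the inner-function assumption of Theorem~\ref{rootsbddinnerf} can be dropped. In its place I would use a norm bound: in each integral defining $B$, $C$, $D$ the total power of $f$ is at most $2r+2 = p$, so $\int |f|^{s}\,dm \le \|f\|_p^{s} \le 1$ for every $s \le p$, whence $\bigl|\int (Qf)^{j}(\overline{Qf})^{k} z^{\ell} f^{t}\,dm\bigr| \le |a|^{j+k}(1+|w|)^{j+k}$ uniformly in $f$. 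Keeping first-order terms and setting $\sigma = \int |f|^2\,dm = \|f\|_2^2$ and $\tau = \int z|f|^2\,dm$, a direct computation gives $B = -ra\sigma + O(|a|^2 + |a||w|)$, $C = \sigma + O(|a|)$ and $D = \bar\tau + O(|a|)$, with all constants depending only on $p$. Substituting into $w=(aC-B)/(aD)$ and clearing $a$ yields the key relation $w\bar\tau = \sigma + O(|a|)$. Since $|\tau| = \bigl|\int z|f|^2\,dm\bigr| \le \int |f|^2\,dm = \sigma \le \|f\|_p^2 = 1$, this controls $|w|$ from below \emph{provided} $\sigma$ stays bounded away from $0$: then $|w| \ge (\sigma - C_p|a|)/\sigma \to 1$, contradicting $w \to 0$.

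The main obstacle is exactly that $\sigma$ need not stay bounded below. In the inner case $\sigma = \|f\|_2^2 = 1$ for free, which is what makes the analogous expression blow up; here the relation $w\bar\tau = \sigma + O(|a|)$ combined with the standing hypothesis $w\to 0$ instead \emph{forces} $\sigma(1-|w|) \le C_p|a|$, so $\sigma = \|f\|_2^2 \to 0$ and the extremal function degenerates in $L^2$ even though $\|f\|_p = 1$; the naive leading terms of $B$, $C$, $D$ then collapse to the order of their own error terms. To finish in this degenerate regime I would bring in the companion formula \eqref{dubform2}, $w = (a\bar D - A)/(aC)$, which in the same way yields $\overline{f(0)} = (1+r)a(\tau - w\sigma) + O(|a|^2)$, hence $|f(0)| \lesssim |a|\sigma + |a|^2 \lesssim |a|^2$, and then play this against the lower bound $C_1|a|^{p-1} \le |wf(0)|$ of Lemma~\ref{redbellpepper2} together with $|w|^2 \gtrsim |a|^{p-2}$ from Lemma~\ref{pumpkinpie2}. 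The expectation, matching the ``incommensurate rates'' mechanism announced before the lemmas, is that the rates these relations impose on $|a|$, $|w|$ and $|f(0)|$ cannot hold simultaneously. Verifying this incompatibility carefully---keeping the $p$-dependent constants honest all the way through the degenerate $L^2$ regime---is the delicate heart of the argument, and is where I expect the real difficulty to lie.
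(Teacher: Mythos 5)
Your skeleton is the same as the paper's: argue by contradiction, use the even-integer hypothesis to expand $|Qf-1|^{p-2}$ as an exact finite sum, feed the resulting expressions for $A,B,C,D$ into the two formulas \eqref{dubform3} and \eqref{dubform2} of Theorem \ref{formulasforw}, discover that $\|f\|_2$ degenerates, and close with Lemma \ref{redbellpepper2}. You have also correctly identified the crux: the regime $\|f\|_2 \to 0$ is not an obstacle but the mechanism. However, there is a genuine quantitative gap exactly where you say you expect difficulty, and with your estimates as stated the argument does not close. Your uniform bound $\bigl|\int (Qf)^{j}(\overline{Qf})^{k} z^{\ell} f^{t}\,dm\bigr| \le |a|^{j+k}(1+|w|)^{j+k}$ throws away the smallness of $f$ in lower $L^q$ norms, so your error terms are $O(|a|)$ rather than $O(|a|\,\|f\|_2 + |a|^2)$. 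Tracing your own relations through (take $p=4$): $w\bar\tau = \sigma + O(|a|)$ gives only $\sigma \lesssim |a|$, i.e.\ $\|f\|_2 \lesssim |a|^{1/2}$, and then $|f(0)| \lesssim |a|\sigma + |a|^2 \lesssim |a|^2$. Feeding this into Lemma \ref{redbellpepper2}, $C_1|a|^{3} \le |wf(0)| \le |w|\cdot C|a|^2$, yields only $|w| \gtrsim |a|$ --- which is perfectly consistent with $w \to 0$ and $a \to 0$ simultaneously. The rates are commensurate, not incommensurate, so there is no contradiction.

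The missing idea is to keep a factor of $\|f\|_2$ in every error term, via the Cauchy--Schwarz estimate $\int |f|^3\,dm \le \|f\|_2\,\|f\|_4^2 = \|f\|_2$ (this is what the paper does). Then the first formula gives a quadratic inequality in $\|f\|_2$ whose solution is $\|f\|_2 \le C|a|$ (not merely $\|f\|_2^2 \le C|a|$), and the second formula, with its errors now of the form $|a|\|f\|_2^2 + |a|^2\|f\|_2 + |a|^3$, upgrades your $|f(0)| \lesssim |a|^2$ to $|f(0)| \le C'|a|^3$. Only then does Lemma \ref{redbellpepper2} bite: $C_1|a|^3 \le |wf(0)| \le C'|a|^3|w|$ forces $|w| \ge C_1/C'$, the desired contradiction. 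For general even $p > 4$ the same refinement is needed but $\|f\|_2$ no longer suffices as the bookkeeping quantity; the paper replaces it by $\|\phi\|_{p-2}$ with $\phi = |f|^{2/(p-2)}$, using the H\"older estimate $\int |f|^j|Qf|^k\,dm \le |2a|^k \|\phi\|_{p-2}^{p-j-k}$ (Lemma \ref{keyobs}) together with an elementary lemma on polynomial inequalities (Lemma \ref{elemfact}) to extract $\|\phi\|_{p-2} \le C|a|$ and then $|f(0)| \le C'|a|^{p-1}$, matching the exponent $p-1$ in Lemma \ref{redbellpepper2}. Your proposal, which tracks only $\sigma = \|f\|_2^2$ and discards these weighted factors, cannot reach the exponent $p-1$ and therefore cannot produce the contradiction.
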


\begin{Remark} In the following proof, we present the case $p=4$ in detail; after providing a couple of lemmas, we then point out how to extend this argument to the general case when $p>4$ is an even integer.
\end{Remark}

\begin{proof}
As we proceed, there is no harm in assuming that $\|f\|_4 = 1$, since that does not affect the value of $w$.
The proof will be by contradiction.  To that end, suppose that we have a sequence of selections for $f$ satisfying the hypotheses, for which the corresponding values of $w$ tend toward zero.  For visual clarity we suppress the index.  We have previously established that the associated values of $a$ and $f(0)$ also tend toward zero.

Write $Q(z) = q_{1,4}[f](z) = a(z-w)$, and note that
\[
     |Qf-1|^{p-2}  =  |Qf-1|^{4-2} = (1 - Qf)(1-\overline{Qf})  =  1 - Qf - \overline{Qf} + |Qf|^2.
\]   
   
Again, let us use the integral expressions $A$, $B$, $C$ and $D$  and associated formulas from Theorem \ref{formulasforw}.

By taking integrals against $|Qf-1|^{p-2} = 1 - Qf - \overline{Qf} + |Qf|^2$, we obtain
\begin{align*}
  A &= \overline{f(0)} - \int\bar{f}Qf\,dm -\int\overline{f^2Q}\,dm + \int \bar{f}|Qf|^2\,dm\\
  B &= \int \overline{zf}\,dm -\int\overline{zf}Qf\,dm - \int\overline{zf^2Q}\,dm +\int\overline{zf}|Qf|^2\,dm\\
       &=  0 -\int\overline{zf}Qf\,dm - 0  +\int\overline{zf}|Qf|^2\,dm \\
       &=    -a\int |f|^2\,dm + aw\int\bar{z}|f|^2\,dm +\int \overline{zf}|Qf|^2\,dm \\
  C &= \int|f|^2\,dm -\int Qf|f|^2\,dm - \int \overline{Qf}|f|^2\,dm + \int |f|^2|Qf|^2\,dm\\
   D &= \int\bar{z}|f|^2\,dm  -\int\bar{z}Qf|f|^2\,dm  - \int\overline{zQf}|f|^2\,dm +\int\bar{z}|f|^2|Qf|^2\,dm.
\end{align*}
Notice that in the final expression for $B$, one of the integrals was broken out into two terms, namely 
\[
      \int\overline{zf}Qf\,dm = a\int |f|^2\,dm - aw\int\bar{z}|f|^2.
\]

Next, we substitute the resulting expressions into the formula
\[
       w = \frac{aC-B}{aD}
\]
from \eqref{dubform3},
with the result
\begin{align*}
     w &= \Bigg\{ a\int|f|^2\,dm -a\int Qf|f|^2\,dm - a\int \overline{Qf}|f|^2\,dm + a\int |f|^2|Qf|^2\,dm \\
          &\qquad - \bigg[  -a\int |f|^2\,dm + aw\int\bar{z}|f|^2\,dm +\int \overline{zf}|Qf|^2\,dm \bigg] \Bigg\}\\
          &\qquad \div\Bigg\{  a\int\bar{z}|f|^2\,dm  -a\int\bar{z}Qf|f|^2\,dm  - a\int\overline{zQf}|f|^2\,dm +a\int\bar{z}|f|^2|Qf|^2\,dm \Bigg\}\\
          &= \Bigg\{ 2a\int|f|^2\,dm -a\int Qf|f|^2\,dm - a\int \overline{Qf}|f|^2\,dm + a\int |f|^2|Qf|^2\,dm \\
          &\qquad   - aw\int\bar{z}|f|^2\,dm -\int \overline{zf}|Qf|^2\,dm  \Bigg\}\\
          &\qquad \div\Bigg\{  a\int\bar{z}|f|^2\,dm  -a\int\bar{z}Qf|f|^2\,dm  - a\int\overline{zQf}|f|^2\,dm +a\int\bar{z}|f|^2|Qf|^2\,dm \Bigg\}.
\end{align*}

Next, we bound $|w|$ from below using the triangle inequalities for sums and integrals.   This yields
\begin{align*}
     |w|   &= \Bigg| 2a\int|f|^2\,dm -a\int Qf|f|^2\,dm - a\int \overline{Qf}|f|^2\,dm + a\int |f|^2|Qf|^2\,dm \\
          &\qquad   - aw\int\bar{z}|f|^2\,dm -\int \overline{zf}|Qf|^2\,dm  \Bigg|\\
          &\qquad \div\Bigg|  a\int\bar{z}|f|^2\,dm  -a\int\bar{z}Qf|f|^2\,dm  - a\int\overline{zQf}|f|^2\,dm +a\int\bar{z}|f|^2|Qf|^2\,dm \Bigg| \\
           &\geq  \Bigg\{ 2|a|\int|f|^2\,dm -\bigg[|a|\int |Qf||f|^2\,dm + |a|\int |Qf||f|^2\,dm + |a|\int |f|^2|Qf|^2\,dm \\
          &\qquad   + |aw|\int|f|^2\,dm +\int |f||Qf|^2\,dm \bigg] \Bigg\}\\
          &\qquad \div\Bigg\{  |a|\int|f|^2\,dm  +|a|\int|Qf||f|^2\,dm + |a|\int|Qf||f|^2\,dm +|a|\int|f|^2|Qf|^2\,dm \Bigg\} \\
\end{align*}

Apply the following estimates
\begin{align*}
     |Q(z)| &\leq  |a|(1 + |w|) \leq 2|a|  \\
     \int |f|^3\,dm &\leq \Big( \int|f|^2 \,dm  \Big)^{1/2}\Big( \int|f|^4 \,dm \Big)^{1/2} \leq \|f\|_2 \|f\|_4^2 = \|f\|_2,
\end{align*}
with occurrences of $\int |f|^2\,dm$ recorded as $\|f\|_2^2$.  The bounds continue with
\begin{align*}
     |w|    &\geq  \Bigg\{ 2|a|\|f\|_2^2 -\bigg[2|a|^2\|f\|_2  + 2|a|^2\|f\|_2 + 4|a|^3  
            + |aw|\|f\|_2^2 +4|a|^2\|f\|_2 \bigg] \Bigg\}\\
          &\qquad \div\Bigg\{  |a|\|f\|_2^2  +2|a|^2\|f\|_2 + 2|a|^2\|f\|_2 +4|a|^3 \Bigg\} \\
          &=  \Bigg\{ 2|a|\|f\|_2^2 -\bigg[8|a|^2\|f\|_2   + 4|a|^3  
            + |aw|\|f\|_2^2  \bigg] \Bigg\}\\
          &\qquad \div\Bigg\{  |a|\|f\|_2^2  +4|a|^2\|f\|_2  +4|a|^3 \Bigg\}.
\end{align*}

This implies
\begin{align}
      |aw|\|f\|_2^2  +4|a|^2|w|\|f\|_2  +4|a|^3|w| + 8|a|^2\|f\|_2   + 4|a|^3  
            + |aw|\|f\|_2^2 &\geq 2|a|\|f\|_2^2\nonumber \\
              2|a|^2(2+|w|)\|f\|_2   + 2|a|^3(1+|w|)  
             &\geq |a|(1 - |w|)\|f\|_2^2.  \label{firstineqfnormsub2sq}
\end{align}
By solving the last inequality we find that
\[
    \|f\|_2 \leq \frac{2|a|^2(2+|w|) + \sqrt{4|a|^4(2+ |w|)^2 + 8|a|^4(1- |w|^2)}}{2|a|(1-|w|)}.
\]
In particular,  this must be true in the limit as $|w|$ tends toward zero (through which the right hand side decreases), so we get
\begin{equation}\label{f2normvsa}
     \|f\|_2 \leq  C|a|,
\end{equation}
where we can take $C= (2 + \sqrt{6})$.

\bigskip

Let us set \eqref{f2normvsa} aside for now, and perform the analogous calculation starting with the formula
\[
       w= \frac{a\bar{D}-A}{aC},
\]
also from Theorem \ref{formulasforw}.

The result is
\begin{align*}
     w &=   \Bigg\{   a\int\bar{z}|f|^2\,dm  -a\int\bar{z}Qf|f|^2\,dm  - a\int\overline{zQf}|f|^2\,dm +a\int\bar{z}|f|^2|Qf|^2\,dm \\
         &\qquad          - \bigg[\overline{f(0)} - \int\bar{f}Qf\,dm -\int\overline{f^2Q}\,dm + \int \bar{f}|Qf|^2\,dm \bigg] \Bigg\} \\
         &\qquad  \div \Bigg\{   a\int|f|^2\,dm -a\int Qf|f|^2\,dm - a\int \overline{Qf}|f|^2\,dm + a\int |f|^2|Qf|^2\,dm  \Bigg\}\\
     |w| &=   \Bigg|   a\int\bar{z}|f|^2\,dm  -a\int\bar{z}Qf|f|^2\,dm  - a\int\overline{zQf}|f|^2\,dm +a\int\bar{z}|f|^2|Qf|^2\,dm \\
         &\qquad          - \bigg[\overline{f(0)} - \int\bar{f}Qf\,dm -\int\overline{f^2Q}\,dm + \int \bar{f}|Qf|^2\,dm \bigg] \Bigg| \\
         &\qquad  \div \Bigg|  a\int|f|^2\,dm -a\int Qf|f|^2\,dm - a\int \overline{Qf}|f|^2\,dm + a\int |f|^2|Qf|^2\,dm  \Bigg|  
\end{align*}
\begin{align*}
         &\geq   \Bigg\{ |{f(0)}| - \bigg[ |a|\int|f|^2\,dm  +|a|\int|Qf||f|^2\,dm + |a|\int|Qf||f|^2\,dm +|a|\int|f|^2|Qf|^2\,dm \\
         &\qquad         + \int|f||Qf|\,dm +\int|f^2Q|\,dm + \int|f||Qf|^2\,dm \bigg] \Bigg\} \\
         &\qquad  \div \Bigg\{  |a|\int |f|^2\,dm+|a|\int |Qf||f|^2\,dm + |a|\int |Qf||f|^2\,dm + |a|\int |f|^2|Qf|^2\,dm  \Bigg\}\\
         &\geq   \Bigg\{ |{f(0)}| - \bigg[5 |a|\|f\|_2^2  +8|a|^2\|f\|_2 +4|a|^3
            \bigg] \Bigg\} 
         \div \Bigg\{  |a|\|f\|_2^2+4|a|^2\|f\|_2 + 4|a|^3  \Bigg\}.
\end{align*}

This yields
\begin{equation}\label{ineqforfnought}
       |aw|\|f\|_2^2+4|a|^2|w|\|f\|_2 + 4|a|^3|w| + 5 |a|\|f\|_2^2  +8|a|^2\|f\|_2 +4|a|^3 \geq |f(0)|     
\end{equation} 

Now apply \eqref{f2normvsa}, to replace the left side with a larger bound:
\begin{align*}
      |aw|C^2|a|^2+4|a|^2|w|C|a| + 4|a|^3|w| + 5 |a|C^2|a|^2  +8|a|^2 C|a| +4|a|^3 \geq |f(0)|.
\end{align*}

The conclusion is that
\begin{equation}\label{fnoughtvsacubed}
     |f(0)| \leq C'|a|^3,
\end{equation}
where, with $|w| \leq 1$, we can take
\[
    C' =  6C^2 + 12C + 8.
\]

\bigskip

Finally, combine \eqref{fnoughtvsacubed} with Lemma \ref{redbellpepper2}, to the effect that
\[
     C'|a^3w| \geq |wf(0)| \geq C_1|a|^3.
\]
for some $C_1 > 0$.

But then, this implies that
\[
     |w| \geq \frac{C_1}{C'},
\]
contradicting the assumption that $|w|$ can be made arbitrarily small. 
\end{proof}

This proves Theorem \ref{rootsbddevenint} when $p=4$.
Our aim now is to extend this argument to the general case, in which $p$ is any even integer exceeding four. Before doing so, we need two lemmas. The key observation for this extension is contained in the following lemma.

\begin{Lemma}\label{keyobs}
   Let $2 < p < \infty$, and suppose that $2 \leq k < p$.  Then for any $f \in H^p$ with $\|f\|_p = 1$, we have
   \[
        \int |f|^k\,dm \leq \|\phi\|_{p-2}^{p-k},
   \]
   where $\phi = |f|^{2/(p-2)}$.
\end{Lemma}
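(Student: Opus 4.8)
The plan is to recognize Lemma \ref{keyobs} as a standard interpolation inequality between $L^2$ and $L^p$, disguised in the notation $\phi = |f|^{2/(p-2)}$. The first thing I would do is unwind the right-hand side. Since $\phi^{p-2} = |f|^2$, we have
\[
\|\phi\|_{p-2}^{p-2} = \int |\phi|^{p-2}\,dm = \int |f|^2\,dm = \|f\|_2^2,
\]
so that $\|\phi\|_{p-2} = \|f\|_2^{2/(p-2)}$ and the target bound reads
\[
\int |f|^k\,dm \leq \|\phi\|_{p-2}^{p-k} = \|f\|_2^{2(p-k)/(p-2)}.
\]
This reformulation makes clear that the lemma is really a statement controlling $\|f\|_k^k$ by a power of $\|f\|_2$, using the normalization $\|f\|_p = 1$.

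Next I would prove the reformulated inequality by a single application of H\"older's inequality. Writing $|f|^k = |f|^{2\alpha}\,|f|^{p\beta}$ with the convex combination
\[
\alpha = \frac{p-k}{p-2}, \qquad \beta = \frac{k-2}{p-2}, \qquad \alpha + \beta = 1, \qquad 2\alpha + p\beta = k,
\]
H\"older's inequality with conjugate exponents $1/\alpha$ and $1/\beta$ yields
\[
\int |f|^k\,dm \leq \Big(\int |f|^2\,dm\Big)^{\alpha}\Big(\int |f|^p\,dm\Big)^{\beta} = \|f\|_2^{2\alpha}\,\|f\|_p^{p\beta} = \|f\|_2^{2\alpha},
\]
where the last equality uses $\|f\|_p = 1$. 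Since $2\alpha = 2(p-k)/(p-2)$, this is exactly the reformulated target, and substituting back $\|\phi\|_{p-2} = \|f\|_2^{2/(p-2)}$ recovers the stated form.

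I do not expect any serious obstacle here; the entire content lies in choosing the H\"older exponents, and the only care needed is exponent bookkeeping. I would check that $2 \leq k < p$ guarantees $\alpha,\beta \geq 0$ with $\alpha+\beta = 1$, so that $1/\alpha, 1/\beta$ are legitimate conjugate exponents (the endpoint $k=2$ degenerates to the trivial identity $\int|f|^2\,dm \leq \int|f|^2\,dm$, while $k=p$ is excluded by hypothesis). Finiteness of all integrals is automatic, since $k<p$ and $f \in H^p$ force the boundary function into $L^p \subset L^{k}$, so $|f|^k \in L^1$. The one arithmetic point worth verifying explicitly is that the power $2(p-k)/(p-2)$ produced by H\"older matches the exponent $p-k$ attached to $\|\phi\|_{p-2}$ after the substitution $\|\phi\|_{p-2} = \|f\|_2^{2/(p-2)}$, which it does.
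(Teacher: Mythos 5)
Your proof is correct and is essentially the same as the paper's: both split $|f|^k = |f|^{2\alpha}|f|^{p\beta}$ with $\alpha = (p-k)/(p-2)$, $\beta = (k-2)/(p-2)$ and apply H\"older with conjugate exponents $1/\alpha$, $1/\beta$, using $\|f\|_p = 1$ to kill the second factor. The only difference is cosmetic --- you first rewrite $\|\phi\|_{p-2} = \|f\|_2^{2/(p-2)}$ so the statement reads as a standard $L^2$--$L^p$ interpolation bound, whereas the paper carries the $\phi$ notation through the same computation.
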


\begin{proof}  For $k=2$ the claim follows from
\[
   \int|f|^2\,dm = \int\phi^{p-2}\,dm = \|\phi\|_{p-2}^{p-2}.
\]

Next, let $2<k<p$.  From
\[
       \frac{p-k}{p-2} + \frac{k-2}{p-2} = 1
\]     
we see that $(p-2)/(p-k)$ and $(p-2)/(k-2)$ are H\"{o}lder conjugates to each other.  

Notice also that
\[
     \bigg[  k - \frac{2(p-k)}{p-2} \bigg]\frac{p-2}{k-2} = \bigg[\frac{pk-2k-2p+2k}{p-2}\bigg]\frac{p-2}{k-2} = p.
\]

We now apply H\"{o}lder's inequality, resulting in
\begin{align*}     
      \int |f|^k\,dm  &=  \int |f|^{2(p-k)/(p-2)} |f|^{k- 2(p-k)/(p-2)} \, dm   \\
           &\leq  \Big( \int |f|^{[2(p-k)/(p-2)]\cdot [(p-2)/(p-k)]}  \, dm\Big)^{(p-k)/(p-2)}  \\
            &\qquad       \times \Big( \int  |f|^{[k- 2(p-k)/(p-2)]\cdot [ (p-2)/(k-2) ]} \, dm \Big)^{(k-2)/(p-2)}  \\
            &\leq \bigg\{ \Big( \int |f|^{[2/(p-2)]\cdot (p-2)}  \, dm\Big)^{1/(p-2)} \bigg\}^{p-k} 
            \Big( \int  |f|^{p} \, dm \Big)^{(k-2)/(p-2)}\\
            &\leq \bigg\{ \Big( \int |\phi|^{p-2}  \, dm\Big)^{1/(p-2)} \bigg\}^{p-k} 
            \bigg\{ \Big( \int  |f|^{p} \, dm \Big)^{1/p} \bigg\}^{p(k-2)/(p-2)}  \\
            &= \|\phi\|_{p-2}^{p-k}\|f\|_p^{p(k-2)/(p-2)}\\
            &= \|\phi\|_{p-2}^{p-k}.
\end{align*}    
\end{proof}

In addition to the lemma above, we'll also need the following elementary fact.
\begin{Lemma}  \label{elemfact}
Suppose that $n$ is an integer greater than 2.
   Let $\{x_k\}$ and $\{y_k\}$ be positive sequences, and suppose that for some $a_0 >0$ and nonnegative constants $a_1, a_2,\ldots, a_n$ we have
   \[
            a_0 x_k^n +a_1x_k ^{n-1}y_k + a_2 x_k^{n-2}y_k^2 + \cdots + a_{n-1}x_k y_k^{n-1} \geq y_k^n,\ \,\forall k.
   \]
   Then there exists a constant $C>0$ such that
   \[
       Cx_k \geq  y_k,\ \,\forall k.
   \]
\end{Lemma}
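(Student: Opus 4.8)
The plan is to exploit the homogeneity of the hypothesis by reducing it to a one-variable statement about a single polynomial. Since every $y_k > 0$, I would divide the given inequality through by $y_k^n$ and introduce the ratios $t_k := x_k/y_k > 0$. This transforms the hypothesis into
\[
       a_0 t_k^n + a_1 t_k^{n-1} + \cdots + a_{n-1} t_k \geq 1, \qquad \forall k.
\]
In other words, writing $P(t) := a_0 t^n + a_1 t^{n-1} + \cdots + a_{n-1} t$, we have $P(t_k) \geq 1$ for every $k$, and the whole problem now concerns the scalar polynomial $P$ rather than the two sequences.

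Next I would record the elementary analytic features of $P$ on $[0,\infty)$. Because $a_0 > 0$ and the remaining coefficients $a_1, \dots, a_{n-1}$ are nonnegative, the derivative $P'(t) = n a_0 t^{n-1} + (n-1) a_1 t^{n-2} + \cdots + a_{n-1}$ is strictly positive for $t > 0$, so $P$ is strictly increasing on $[0,\infty)$. Moreover $P(0) = 0 < 1$ and $P(t) \to \infty$ as $t \to \infty$ (again since $a_0 > 0$). By the intermediate value theorem there is a unique threshold $t^\ast > 0$ with $P(t^\ast) = 1$, and strict monotonicity gives the equivalence $P(t) \geq 1 \iff t \geq t^\ast$.

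The conclusion then follows immediately: each $t_k$ satisfies $P(t_k) \geq 1$, so $t_k \geq t^\ast$, that is $x_k/y_k \geq t^\ast$ for all $k$. Setting $C := 1/t^\ast > 0$ yields $y_k \leq C x_k$ for every $k$, as claimed. I do not anticipate a genuine obstacle here, since the statement is elementary once normalized; the only points requiring a word of care are that the division by $y_k^n$ is legitimate (which is exactly why the hypothesis insists the sequences be positive) and that the monotonicity of $P$ on $[0,\infty)$ truly needs both the sign condition $a_0 > 0$ and the nonnegativity of the lower coefficients. The crux of the argument is simply recognizing the homogenization trick that collapses the $n$-term inequality into the single scalar condition $P(t_k)\ge 1$.
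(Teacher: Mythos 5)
Your proof is correct, and it takes a genuinely different route from the paper's. You exploit the degree-$n$ homogeneity directly: dividing by $y_k^n$ collapses the hypothesis into the scalar condition $P(t_k)\geq 1$ for the single polynomial $P(t)=a_0t^n+a_1t^{n-1}+\cdots+a_{n-1}t$, and then strict monotonicity of $P$ on $[0,\infty)$ (which indeed uses $a_0>0$ together with nonnegativity of the remaining coefficients) plus the intermediate value theorem give a threshold $t^\ast>0$ with $P(t)\geq 1\iff t\geq t^\ast$, so that $C=1/t^\ast$ works. The paper never normalizes: it first enlarges the coefficients so that the left-hand side is dominated by $C'x_k\bigl(x_k^{n-1}+y_k^{n-1}\bigr)$, and then splits into the cases $x_k\geq y_k$ and $x_k<y_k$, absorbing the dominant power in each case to get $(2C')^{1/n}x_k\geq y_k$ or $2C'x_k\geq y_k$ respectively, ending with the explicit constant $C=(2C')^{1/n}+2C'$. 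Both arguments are elementary and rest on the same homogeneity observation, but they buy different things: your version is cleaner, works for every $n\geq 1$ (the paper assumes $n>2$), and produces the sharp constant, namely the reciprocal of the unique positive root of $P(t)=1$; the paper's two-case absorption avoids any appeal to calculus or root-finding and yields a constant written explicitly in terms of the $a_i$, which is all that is needed for its application in Theorem \ref{rootsbddevenint}.
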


\begin{proof}
    First, by selecting possibly larger values of the constants $a_0, a_1, a_2,\ldots, a_n$, we can find $C'>0$ such that
  \[
        C' x_k(x_k^{n-1} + y_k^{n-1}) \geq y_k^{n}, \ \,\forall k.
  \]
  
  For those indices $k$ satisfying $x_k \geq y_k$, we have
  \[
         2C' x_k^n \geq C' x_k(x_k^{n-1} + y_k^{n-1}) \geq y_k^{n}.
  \]
  This implies
  \[
         (2C')^{1/n} x_k \geq y_k
  \]
  for such $k$.

  For the remaining indices $k$, in which $x_k < y_k$, we have
  \[
       2C' x_k y_k^{n-1} \geq  C' x_k(x_k^{n-1} + y_k^{n-1}) \geq y_k^{n}.
  \]
  This implies
  \[
         2C' x_k \geq y_k
  \]
  for such $k$.
  
Thus, the claim holds, with $C =   (2C')^{1/n} + 2C'$. 
\end{proof}

We are now equipped to complete the full proof of Theorem \ref{rootsbddevenint}.

\begin{proof}(Theorem \ref{rootsbddevenint}, $p>4$ an even integer).
     
We begin by following the calculations from the $p=4$ case, using estimates of the form
\[
       \int |f|^j |Qf|^k\,dm  \leq |2a|^k\|\phi\|_{p-2}^{p-j-k},
\]
arising from Lemma \ref{keyobs}.

During these calculations, the quantity $\|\phi\|_{p-2}$ takes over the role of $\|f\|_2$ from the $p=4$ case. (In fact, when $p=4$, we have 
\[
        \|\phi\|_{p-2} = \Big(\int |\phi|^{p-2}\,dm\Big)^{1/(p-2)} = \Big( \int |f|^2 \,dm\Big)^{1/2} = \|f\|_2.)
\]

The outcome analogous to \eqref{firstineqfnormsub2sq} is that
\[
       b_0 |a|^{p-1} + b_1 |a|^{p-2} \|\phi\|_{p-2}+ b_2 |a|^{p-3} \|\phi\|_{p-2}^2 + \cdots + b_{p-2} |a| \|\phi\|_{p-2}^{p-2} \geq \|\phi\|_{p-2}^{p-1},
\]
for some positive constants $b_0$, $b_1$,\ldots, $b_{p-2}$.

As a result of Lemma \ref{elemfact}, there is a positive constant $C$ such that
\[
         \|\phi\|_{p-2} \leq C|a|.
\]

Analogous to \eqref{ineqforfnought}, we have
\[
      B_1(1 + |w|) |a|  \|\phi\|_{p-2}^{p-2} + B_2(1 + |w|) |a|^2  \|\phi\|_{p-2}^{p-3} + \cdots + B_{p-1} (1 + |w|)|a|^{p-1} \geq |f(0)|, 
\]
for some positive constants $B_1$, $B_2$,\ldots, $B_{p-1}$.   As a consequence,
\[
       |f(0)| \leq C'|a|^{p-1}
\]
for some $C'>0$.

Once again, we invoke Lemma \ref{redbellpepper2}.  The end result is 
\[
       C' |wa^{p-1}| \geq  |wf(0)| \geq C_1|a|^{p-1},
\]     
which implies
\[
        |w| \geq \frac{C_1}{C'},
\]
 contradicting the assumption that $w$ could be made arbitrarily close to zero.  This completes the proof of Theorem \ref{rootsbddevenint}.
\end{proof}
\bigskip

 We end by noting again that Theorems \ref{rootsbddinnerf} and \ref{rootsbddevenint} show, in two special cases, the roots of a nontrivial OPA must lie outside a disk of some radius $\rho>0$, centered at the origin.  It would desirable to extend this result to all $f \in H^p$, and to all $p$, $1<p<\infty$.  It would also be particularly interesting to determine whether $\rho = 1$.  This situation invites further investigation.


\bibliographystyle{plain}

\bibliography{biblio.bib}


\end{document}